\numberwithin{equation}{section} \textwidth=17.5cm
 \newtheorem{thm}{Theorem}[section]
 \newtheorem{lem}[thm]{Lemma}
 \theoremstyle{definition}
 \theoremstyle{remark}
 \numberwithin{equation}{section}
\numberwithin{equation}{section}
\begin{document}

\title[Derivations on the algebra of $\tau$-compact operators]{Spatiality
of derivations on the algebra of $\tau$-compact operators}

\author{Shavkat Ayupov and Karimbergen Kudaybergenov}

\address[Shavkat Ayupov]{Institute of
 Mathematics, National University of
Uzbekistan,
 100125  Tashkent,   Uzbekistan
 and
 the Abdus Salam International Centre
 for Theoretical Physics (ICTP),
  Trieste, Italy}
\email{sh$_{-}$ayupov@mail.ru}

\address[Karimbergen Kudaybergenov]{Department of Mathematics, Karakalpak state
university, Nukus 230113, Uzbekistan.} \email{karim2006@mail.ru}

\maketitle
\begin{abstract}
This paper is devoted to derivations on the algebra $S_0(M, \tau)$
of all $\tau$-compact operators affiliated with a von Neumann
algebra $M$ and a faithful normal semi-finite trace $\tau.$ The
main result asserts that every $t_\tau$-continuous derivation
$D:S_0(M, \tau)\rightarrow S_0(M, \tau)$ is spatial and
implemented by a $\tau$-measurable operator affiliated with $M$,
where $t_\tau$ denotes the measure topology  on  $S_0(M, \tau)$.
We also show the automatic $t_\tau$-continuity of all derivations
on $S_0(M, \tau)$ for properly infinite von Neumann algebras $M$.
Thus in the properly infinite case the condition of
$t_\tau$-continuity of the derivation  is redundant for its
spatiality.
\end{abstract} \maketitle

\bigskip

\section{Introduction}

\medskip

Given an algebra $\mathcal{A},$ a linear operator
$D:\mathcal{A}\rightarrow \mathcal{A}$ is called a
\textit{derivation}, if $D(xy)=D(x)y+xD(y)$ for all $x, y\in
\mathcal{A}$ (the Leibniz rule). Each element $a\in \mathcal{A}$
implements a derivation $D_a$ on $\mathcal{A}$ defined as
$D_a(x)=[a, x]=ax-xa,$ $x\in \mathcal{A}.$ Such derivations $D_a$
are said to be \textit{inner derivations}. If the element $a,$
implementing the derivation $D_a,$ belongs to a larger algebra
$\mathcal{B}$ containing $\mathcal{A},$ then $D_a$ is called
\textit{a spatial derivation} on $\mathcal{A}.$

One of the main problems in the theory of derivations is to prove
the automatic continuity, ``innerness'' or ``spatiality'' of
derivations, or to show the existence of non-inner, non-spatial and moreover
discontinuous derivations on various topological algebras.

 The theory of derivations in the framework of operator algebras is an
  important and well investigated part of this theory, with applications in mathematical physics.
 It is well known that every derivation of a
  $C^{\ast}$-algebra is bounded (i.e. is norm continuous), and that every derivation of a von
 Neumann algebra is  inner.
 For  a detailed exposition of the theory of bounded derivations we refer to the monographs of
 Sakai \cite{Sak1}, \cite{Sak2}.

 Investigations of general unbounded
 derivations (and derivations on unbounded operator algebras)
 began much later and were motivated mainly by needs of mathematical physics,
  in particular by the
 problem of constructing the dynamics in quantum statistical mechanics
 (see, e.g.  \cite{Bra}, \cite{Sak1}, \cite{Sak2}).

 The development of a non commutative integration theory was
  initiated by I.~Segal \cite{Seg},
 who considered new classes of (not
  necessarily Banach) algebras of unbounded operators, in particular the
 algebra  $S(M)$ of all measurable operators affiliated with a von
 Neumann algebra $M.$ Algebraic, order and topological properties of
 the algebra $S(M)$ are somewhat similar to those of von Neumann algebras, therefore in
 \cite{Ayu1} the first author have initiated the study of derivations on the algebra $S(M).$

  If the von Neumann algebra $M$ is abelian, then it is *-isomorphic to the algebra
   $L^{\infty}(\Omega)=L^{\infty}(\Omega, \Sigma,
\mu)$ of all (classes of equivalence of) complex essentially
bounded measurable functions on a measure space $(\Omega, \Sigma,
\mu)$ and therefore  $S(M)\cong L^{0}(\Omega),$   where
$L^{0}(\Omega)=L^{0}(\Omega, \Sigma, \mu)$ the algebra of all
complex measurable functions on $(\Omega, \Sigma, \mu).$ In  this
case it is clear that  every measure continuous derivation and,
in particular, all inner derivations on $S(M)$ are identically zero, i.e.
trivial.

Investigating the abelian case A.~F.~Ber, F.~A.~Sukochev,
V.~I.~Chilin in  \cite{Ber} obtained necessary and sufficient
conditions for the existence of non trivial derivations on
commutative regular algebras. In particular they have proved that
the algebra  $L^{0}(0, 1)$ of all complex measurable functions on
the $(0, 1)$-interval admits non trivial derivations.
Independently A.~G.~Kusraev \cite{Kus} by means of Boolean-valued
analysis has established necessary and sufficient conditions for
the existence of non trivial derivations and automorphisms on
extended complete complex $f$-algebras. In particular he has also
proved the existence of non trivial derivations and automorphisms
on $L^{0}(0, 1).$ It is clear that these derivations are
discontinuous in the measure topology, and hence they are
not inner.

Therefore the properties of derivations on the unbounded operator
algebra $S(M)$ turned to be very far from   those on
$C^{\ast}$- or von Neumann algebras. Nevertheless there was a
conjecture that the existence of such "exotic" examples of
derivations is deeply connected with the commutative nature of the
underlying von Neumann algebra $M.$ In view of this  we have
initiated investigations of the above problems in the non
commutative case, namely on the algebra $LS(M)$ of all locally
measurable operators affiliated with a von Neumann algebra $M$,
and on its  subalgebras, including  the mentioned algebra $S(M)$, the
algebra $S(M,\tau)$ of all  $\tau$-measurable operators affiliated
with $M$, non commutative Arens algebras, etc. \cite{AAK2007,
AK2007, AAK2008, AAK2009, Alb2, AK1, AK2, AK2013}.

In \cite{Alb2} and \cite{AK1}  derivations on various subalgebras
of the algebra $LS(M)$ of locally measurable operators with
respect to  a von Neumann algebra $M$ has been considered. A
complete description of derivations has been obtained in the cases
when $M$ is of type I and III. Derivations on  algebras of
measurable and locally measurable operators, including rather non
trivial commutative case, have been studied  by many authors
\cite{AAK2007, AK2007, AAK2008, AAK2009, Alb2, Ayu1, AK2, AK1,
AK2013, Ber2013, Ber, BPS, Ber2, Ber3}. A comprehensive survey of
results concerning derivations on various algebras of unbounded
operators affiliated with von Neumann algebras can be found in
\cite{AK2}.

If we consider the algebra $S(M)$ of all measurable
operators affiliated with  a type III von Neumann algebra $M$, then it is clear that
$S(M)=M$. Therefore from the results of  \cite{Alb2} it follows that
 for type I$_\infty$ and type III von Neumann algebras $M$ every derivation on $S(M)$ is
automatically inner and, in particular, is continuous in the local
measure topology.
 The problem of description of the structure of
derivations in the case of type II algebras has been open so far
and seems to be rather difficult.

In this connection several open problems concerning innerness and
automatic continuity of derivations on the algebras $S(M)$ and
$LS(M)$ for type II von Neumann algebras have been posed in
\cite{AK2}. First positive results in this direction were recently
obtained in \cite{Ber2},\cite{Ber2013}, where automatic continuity has been
proved for derivations on
 algebras of  $\tau$-measurable and locally
 measurable operators affiliated with properly infinite von Neumann algebras.

 Another problem  in \cite[Problem 3]{AK2} asks the following
question:

Let $M$ be a type II von Neumann algebra with a faithful normal
semi-finite trace $\tau.$ Consider  the algebra $S(M)$  (respectively $LS(M)$) of all
measurable (respectively locally measurable) operators affiliated with $M$ and equipped with the
local measure topology  $t.$ Is every  $t$-continuous derivation
$D:S(M)\rightarrow S(M)$ (respectively, $D:LS(M)\rightarrow LS(M)$) necessarily inner?

The positive answer for the above question have been given independently in
\cite{AK2013} and \cite{Ber2}, \cite{Ber2013}. It should be noted that the proofs of
innerness of continuous derivations on algebras $S(M)$ and $LS(M)$ in
the above mentioned  papers are essentially based on the fact that the
von Neumann algebra $M$ is a subalgebra in the considered algebras.

The present paper is devoted to derivations on the algebra $S_0(M,
\tau)$ of all $\tau$-compact operators affiliated with a
 von Neumann algebra $M$ with a faithful normal semi-finite trace $\tau.$
  Since  the algebra  $S_0(M,\tau)$ does not contain the von Neumann
algebra $M$, one cannot directly apply the methods of the
papers \cite{AK2013}, \cite{Ber2013} in this case.

In Section 2 we give preliminaries from the theory of measurable operators
affiliated with a von Neumann algebra $M$ and, in particular, we recall the notion
of $\tau$-compact operator with respect to $M$.

 In section 3 we
show that if $M$ is  a properly infinite von Neumann algebra with
a faithful normal semi-finite trace $\tau,$ then any derivation
$D:S_0(M, \tau)\rightarrow S_0(M, \tau)$ is automatically $t_\tau$-continuous.

In section 4 we prove if  $M$ is  a von Neumann algebra with a
faithful normal semi-finite trace $\tau,$ then every
$t_\tau$-continuous derivation $D:S_0(M, \tau)\rightarrow S_0(M,
\tau)$ is spatial and implemented by an element from the algebra
$S(M, \tau)$. As a corollary, we obtain that if  $M$ is  a
properly infinite von Neumann algebra then arbitrary derivation
$D:S_0(M, \tau)\rightarrow S_0(M, \tau)$ is spatial.

\section{Algebras of measurable operators}

Let  $B(H)$ be the $\ast$-algebra of all bounded linear operators
on a Hilbert space $H,$ and let $\textbf{1}$ be the identity
operator on $H.$ Consider a von Neumann algebra $M\subset B(H)$
with the operator norm $\|\cdot\|$ and  with a faithful normal
semi-finite trace $\tau.$ Denote by $P(M)=\{p\in M:
p=p^2=p^\ast\}$ the lattice of all projections in $M.$

A linear subspace  $\mathcal{D}$ in  $H$ is said to be
\emph{affiliated} with  $M$ (denoted as  $\mathcal{D}\eta M$), if
$u(\mathcal{D})\subset \mathcal{D}$ for every unitary  $u$ from
the commutant
$$M'=\{y\in B(H):xy=yx, \,\forall x\in M\}$$ of the von Neumann algebra $M.$

A linear operator  $x: \mathcal{D}(x)\rightarrow H,$ where the
domain  $\mathcal{D}(x)$ of $x$ is a linear subspace of $H,$ is
said to be \textit{affiliated} with  $M$ (denoted as  $x\eta M$)
if $\mathcal{D}(x)\eta M$ and $u(x(\xi))=x(u(\xi))$
 for all  $\xi\in
\mathcal{D}(x)$  and for every unitary  $u\in M'.$

A linear subspace $\mathcal{D}$ in $H$ is said to be
\textit{strongly dense} in  $H$ with respect to the von Neumann
algebra  $M,$ if

1) $\mathcal{D}\eta M;$

2) there exists a sequence of projections $\{p_n\}_{n=1}^{\infty}$
in $P(M)$  such that $p_n\uparrow\textbf{1},$ $p_n(H)\subset
\mathcal{D}$ and $p^{\perp}_n=\textbf{1}-p_n$ is finite in  $M$
for all $n\in\mathbb{N}$.

A closed linear operator  $x$ acting in the Hilbert space $H$ is
said to be \textit{measurable} with respect to the von Neumann
algebra  $M,$ if
 $x\eta M$ and $\mathcal{D}(x)$ is strongly dense in  $H.$

 Denote by $S(M)$  the set of all linear operators on $H,$ measurable with
respect to the von Neumann algebra $M.$ If $x\in S(M),$
$\lambda\in\mathbb{C},$ where $\mathbb{C}$  is the field of
complex numbers, then $\lambda x\in S(M)$  and the operator
$x^\ast,$  adjoint to $x,$  is also measurable with respect to $M$
(see \cite{Seg}). Moreover, if $x, y \in S(M),$ then the operators
$x+y$  and $xy$  are defined on dense subspaces and admit closures
that are called, correspondingly, the strong sum and the strong
product of the operators $x$  and $y,$  and are denoted by
$x\stackrel{.}+y$ and $x \ast y.$ It was shown in \cite{Seg} that
$x\stackrel{.}+y$ and $x \ast y$ belong to $S(M)$ and these
algebraic operations make $S(M)$ a $\ast$-algebra with the
identity $\textbf{1}$  over the field $\mathbb{C}.$ Here, $M$ is a
$\ast$-subalgebra of $S(M).$ In what follows, the strong sum and
the strong product of operators $x$ and $y$  will be denoted in
the same way as the usual operations, by $x+y$  and $x y.$

It is clear that if the von Neumann algebra $M$ is finite then every linear operator
affiliated with $M$ is measurable and, in particular, a self-adjoint operator is
measurable with respect to $M$ if and only if all its
 spectral projections belong to $M$.

 Let   $\tau$ be a faithful normal semi-finite trace on
 $M.$ We recall that a closed linear operator
  $x$ is said to be  $\tau$\textit{-measurable} with respect to the von Neumann algebra
   $M,$ if  $x\eta M$ and   $\mathcal{D}(x)$ is
  $\tau$-dense in  $H,$ i.e. $\mathcal{D}(x)\eta M$ and given   $\varepsilon>0$
  there exists a projection   $p\in M$ such that   $p(H)\subset\mathcal{D}(x)$
  and $\tau(p^{\perp})<\varepsilon.$
   Denote by  $S(M,\tau)$ the set of all   $\tau$-measurable operators affiliated with $M.$

    Consider the topology  $t_{\tau}$ of convergence in measure or \textit{measure topology}
    on $S(M, \tau),$ which is defined by
 the following neighborhoods of zero:
$$
U(\varepsilon, \delta)=\{x\in S(M, \tau): \exists\, e\in P(M),
 \tau(e^{\perp})<\delta, xe\in
M,  \|xe\|<\varepsilon\},$$  where $\varepsilon, \delta$ are
positive numbers.

 It is well-known \cite{Nel} that $M$ is $t_\tau$-dense in $S(M, \tau)$
and $S(M, \tau)$ equipped with the measure topology is a complete
metrizable topological $\ast$-algebra.

In the algebra   $S(M, \tau)$ consider the subset  $S_0(M, \tau)$
of all operators $x$ such that given any $\varepsilon>0$ there is
a projection  $p\in P(M)$ with $\tau(p^{\perp})<\infty,\,xp\in M$
and $\|xp\|<\varepsilon.$ Following \cite{Str} let us call the
elements of $S_0(M, \tau)$ \textit{$\tau$-compact operators} with
respect to $M.$ It is known \cite{Mur} that  $S_0(M, \tau)$ is a
$\ast$-subalgebra in  $S(M, \tau)$ and a bimodule over $M,$ i.e.
$ax, xa\in S_0(M, \tau)$ for all $x\in S_0(M, \tau)$ and $a\in M.$

The following properties of the algebra $S_0(M, \tau)$  are known
(see  \cite{Str}):

Let  $M$ be a von Neumann algebra with a faithful normal
semi-finite trace $\tau.$ Then

1) $S(M, \tau)=M+S_0(M, \tau);$

2) $S_0(M, \tau)$ is an ideal in $S(M, \tau).$

Note that if the trace $\tau$ is finite then
$$S_0(M, \tau)=S(M, \tau)=S(M).$$
 It is well-known \cite{Str} $S_0(M, \tau)$ equipped with the measure topology is a complete
metrizable topological $\ast$-algebra.

\section{Continuity of derivations on
$\ast$-algebras of $\tau$-compact operators in the properly infinite case}

\medskip

In this section we prove the automatic continuity of derivations in the measure topology
 on the algebra of  $\tau$-compact operators affiliated with
  a properly infinite von Neumann algebra and a faithful normal
semi-finite trace.

Denote by $S_0(M, \tau)_b$ the bounded part of the $\ast$-algebra
$S_0(M, \tau),$ i.e.
$$
S_0(M, \tau)_b=M\cap S_0(M, \tau).
$$
Since $S_0(M, \tau)$ is a complete metrizable topological
$\ast$-algebra with respect to the measure topology, and the norm
topology on $M$ is stronger than the measure topology, we have
that $S_0(M, \tau)_b$ is a Banach $\ast$-algebra with respect to
the norm topology.

Set
$$
P_\tau(M)=\{p\in P(M): \tau(p)<+\infty\}.
$$
It is clear that
$$
P_\tau(M)=P(M)\cap S_0(M, \tau).
$$

 Let $D$ be a derivation on $S_0(M, \tau).$ Let us
define a mapping $D^\ast:S_0(M, \tau)\rightarrow S_0(M, \tau)$  by
setting
$$
D^\ast(x)=(D(x^\ast))^\ast,\, x\in S_0(M, \tau).
$$
 A direct verification
shows that $D^\ast$  is also a derivation on $S_0(M, \tau).$  A
derivation $D$ on $S_0(M, \tau)$  is said to be

-- \textit{hermitian}, if $D^\ast=D,$

-- \textit{skew-hermitian}, if $D^\ast=-D.$

 Every
derivation $D$ on $S_0(M, \tau)$ can be represented in the form
$D= D_1+ i D_2,$ where
$$
D_1 = (D +D^\ast)/2,\quad D_2  = (D - D^\ast)/2i
$$
 are hermitian derivations on $S_0(M, \tau).$

 It is clear that a derivation $D$   is continuous
  if and only if the hermitian derivations
$D_1$  and $D_2$ are continuous.

Therefore further in this section we may assume that $D$ is a
hermitian derivation.

\begin{lem}\label{norcon}
 Let $M$ be a  von Neumann algebra with a faithful normal
semi-finite trace $\tau$ and let $D:S_0(M, \tau)\rightarrow S_0(M,
\tau)$  be a derivation. Then the following assertions are
equivalent:

i) $D$ is $t_\tau$-continuous;

ii) $D|_{S_0(M, \tau)_b}$ is $\|\cdot\|$-$t_\tau$-continuous.
\end{lem}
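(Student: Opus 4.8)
The implication (i) $\Rightarrow$ (ii) is the trivial direction: if $D$ is $t_\tau$-continuous on all of $S_0(M,\tau)$, then its restriction to $S_0(M,\tau)_b$ is $t_\tau$-$t_\tau$-continuous; since the norm topology on $S_0(M,\tau)_b$ is stronger than the measure topology (as noted just before the lemma), the restriction $D|_{S_0(M,\tau)_b}$ is a fortiori $\|\cdot\|$-$t_\tau$-continuous.

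The substantive direction is (ii) $\Rightarrow$ (i). The plan is to exploit that $M$ is $t_\tau$-dense in $S(M,\tau)$ and, more to the point, that $S_0(M,\tau)_b = M\cap S_0(M,\tau)$ is $t_\tau$-dense in $S_0(M,\tau)$. Since $S_0(M,\tau)$ is a complete metrizable topological $\ast$-algebra, and $D$ is a linear (hence additive) map, it suffices by a standard density-and-closedness argument to check $t_\tau$-continuity of $D$ at $0$ along sequences. So I would take an arbitrary $t_\tau$-null sequence $x_n\to 0$ in $S_0(M,\tau)$ and aim to show $D(x_n)\to 0$ in measure. The key tool is a suitable decomposition or truncation: for each $x_n$, using the defining neighborhoods $U(\varepsilon,\delta)$, I would find projections $e_n\in P(M)$ with $\tau(e_n^\perp)$ small such that $x_n e_n$ lies in the \emph{bounded} part $S_0(M,\tau)_b$ and has small operator norm, while the complementary piece $x_n e_n^\perp$ is supported on a projection of small trace and is therefore itself $t_\tau$-small. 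Applying the Leibniz rule, $D(x_n e_n) = D(x_n)e_n + x_n D(e_n)$, and similarly for the projections themselves, I would control $D(x_n)$ piece by piece.

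The main obstacle — and where the hypothesis (ii) must do real work — is that $D$ need not map the bounded part into the bounded part, and the multipliers $e_n$ are projections in $M$ whose images $D(e_n)$ live in $S_0(M,\tau)$ and are only controlled in measure, not in norm. The strategy is to arrange that the terms requiring the \emph{norm} bound are exactly those to which hypothesis (ii) applies (namely the bounded truncations $x_n e_n\in S_0(M,\tau)_b$, for which $\|x_n e_n\|\to 0$ forces $D(x_n e_n)\to 0$ in $t_\tau$ by (ii)), while the remaining terms are handled purely by trace-smallness of the supporting projections together with the fact that $D$ is a derivation and so annihilates or controllably transforms these small-support pieces. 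Concretely, I would write $D(x_n) = D(x_n e_n) + D(x_n)e_n^\perp$ and then estimate $D(x_n)e_n^\perp$ using that $e_n^\perp$ has small trace; the module structure ($S_0(M,\tau)$ is a bimodule over $M$) ensures $D(x_n)e_n^\perp$ is $t_\tau$-small whenever $\tau(e_n^\perp)\to 0$, since right multiplication by a projection of small trace is $t_\tau$-continuous and annihilates all but a small-measure part. Assembling these estimates and letting $n\to\infty$ yields $D(x_n)\to 0$ in measure, establishing $t_\tau$-continuity of $D$.
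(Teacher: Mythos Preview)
Your direction (i)$\Rightarrow$(ii) is fine. The gap is in (ii)$\Rightarrow$(i), specifically in the decomposition you propose. The identity $D(x_n)=D(x_ne_n)+D(x_n)e_n^{\perp}$ is false: writing $x_ne_n=x_n-x_ne_n^{\perp}$ and using that $e_n^{\perp}\in S_0(M,\tau)$ (since $\tau(e_n^{\perp})<\infty$), the Leibniz rule gives
\[
D(x_n)=D(x_ne_n)+D(x_n)e_n^{\perp}+x_nD(e_n^{\perp}).
\]
Your argument correctly handles the first two terms: $\|x_ne_n\|\to 0$ so $D(x_ne_n)\to 0$ by (ii), and $D(x_n)e_n^{\perp}\in U(\varepsilon,\tau(e_n^{\perp}))$ for every $\varepsilon>0$, regardless of $D(x_n)$. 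But the third term $x_nD(e_n^{\perp})$ is a product of a $t_\tau$-null sequence with the sequence $D(e_n^{\perp})$, which \emph{varies with $n$} and over which you have no control whatsoever (neither in norm nor in measure), since continuity of $D$ is exactly what you are trying to prove. Multiplication in $S(M,\tau)$ is jointly continuous but not uniformly so, and there is no reason for $x_nD(e_n^{\perp})$ to tend to $0$.

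The paper avoids this trap by two devices you are missing. First, it invokes the closed graph theorem, so one may assume $D(x_n)\to y$ and only needs $y=0$; this allows testing $y$ against fixed projections rather than proving a uniform estimate. Second, and crucially, the truncating projections are chosen \emph{independently of $n$}: one first multiplies by a fixed $p\in P_\tau(M)$, then passes to a subsequence and builds fixed projections $q_i\le p$ (via $q_i=\bigwedge_{k\ge i}p_k$) such that $x_{n_k}pq_i\to 0$ in \emph{norm} for each fixed $i$. Then in $D(x_{n_k}p)q_i=D(x_{n_k}pq_i)-x_{n_k}p\,D(q_i)$ the second term goes to $0$ because $D(q_i)$ is a \emph{fixed} element of $S_0(M,\tau)$ and $x_{n_k}p\to 0$ in $t_\tau$. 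That is precisely the step your scheme with $n$-dependent $e_n$ cannot execute.
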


\begin{proof} Since the norm topology on $S_0(M, \tau)_b$ is stronger than the
measure topology, it follows that implication $i)\Rightarrow ii)$
is true.

Let us show the converse implication. Note that $(S_0(M, \tau),
t_\tau)$  is an $F$-space. Therefore by the "closed graph theorem"
it is sufficient to show that
the graph of the linear operator $D$ is  closed.

Let $x_n \stackrel{t_\tau}\longrightarrow 0$ and $D(x_n)
\stackrel{t_\tau}\longrightarrow y.$ Take an arbitrary $p\in
P_\tau(M).$ Then $x_n p\stackrel{t_\tau}\longrightarrow 0$ and
$$
D(x_n p)=D(x_n)p +x_nD(p)  \stackrel{t_\tau}\longrightarrow yp.
$$
Therefore there exists a sequence $n_1<n_2<\ldots <n_k<\ldots$
such that $x_{n_k}p\in U\left(\frac{1}{k},
\frac{1}{2^{k+1}}\right).$ For each $k\in \mathbb{N}$ take a
projection $p_k\leq p$ such that $\tau(p-p_k)\leq
\frac{1}{2^{k+1}}$ and $\|x_{n_k}pp_k\|\leq\frac{1}{k}.$ Set
$$
q_i=\bigwedge\limits_{k=i}^\infty p_k, \, i\in \mathbb{N}.
$$
Then
$$
\tau(p-q_i)\leq\sum\limits_{k=i}^\infty \tau(p-p_k)\leq
\frac{1}{2^i}
$$
and
$$
\|x_{n_k}pq_i\|\leq \frac{1}{k}
$$
for all $k\geq i.$ This means that $x_{n_k}pq_i
\stackrel{\|\cdot\|}\longrightarrow 0$ as $k\rightarrow \infty$
for all $i.$ Therefore $D(x_{n_k}pq_i)
\stackrel{t_\tau}\longrightarrow 0.$ Thus
$$
D(x_{n_k}p)q_i=D(x_{n_k}pq_i)-x_{n_k}pD(q_i)
\stackrel{t_\tau}\longrightarrow 0.
$$

On the other hand
$$
D(x_{n_k}p)q_i \stackrel{t_\tau}\longrightarrow ypq_i.
$$
So $ypq_i=0$ for all $i.$ Since $q_i\uparrow p$ we have that
$yp=0.$ Taking into account that $p\in
P_\tau(M)$  is arbitrary and the trace $\tau$ is semi-finite, we obain that $y=0.$
This means that $D$ is $t_\tau$-continuous. The proof is complete.
\end{proof}
\begin{thm}\label{contin}
 Let  $M$ be a properly infinite von Neumann algebra with a faithful normal
semi-finite trace $\tau.$ Then any derivation $D:S_0(M,
\tau)\rightarrow S_0(M, \tau)$  is $t_\tau$-continuous.
\end{thm}

\begin{proof} By Lemma~\ref{norcon} it is suffices show that
$D|_{S_0(M, \tau)_b}$ is $\|\cdot\|$-$t_\tau$-continuous.  Since
$(S_0(M, \tau)_b, \|\cdot\|)$  and $(S_0(M, \tau), t_\tau)$  are
$F$-spaces, as above it is sufficient to show that the graph of the linear
operator  $D|_{S_0(M, \tau)_b}$ is closed. For convenience we
denote $D|_{S_0(M, \tau)_b}$ also by $D$.

Let us suppose the converse, i.e. assume that the graph of $D$ is
not closed. This means that there exists a sequence
$\{x_n\}\subset S_0(M, \tau)_b$  such that $x_n
\stackrel{\|\cdot\|}\longrightarrow  0$ and $D(x_n)
\stackrel{t_\tau}\longrightarrow y\neq 0.$ Taking into account
that $S_0(M, \tau)$ is a  $\ast$-algebra and that $D=D^\ast,$ we
may assume that $y = y^\ast,$ $x_n = x_n^\ast$ for all $n\in
\mathbb{N}.$ In this case, $y = y_+ - y_-,$ where $y_+, y_-\in
S_0(M, \tau)$  are respectively the positive and the negative
parts of $y.$ Without loss of generality, we shall also assume
that $y_+\neq 0,$ otherwise, instead of the sequence $\{x_n\}$  we
may consider the sequence $\{-x_n\}.$ Let us choose $\lambda_0>0$
such that the projection $p = \mathbf{1}- e_{\lambda_0}(y)\neq 0.$
We have that
$$
0 < \lambda_0p \leq  pyp.
$$
Again replacing, if necessary, $x_n$ by $x_n=px_np/\lambda_0,$ we
may assume that
\begin{equation*}\label{tens}
y\geq p,
\end{equation*}
because
$$
D\left(px_np\right)=D(p)x_np+pD(x_n)p+px_nD(p)\rightarrow pyp.
$$

Note that $y\in S_0(M, \tau)$ implies that $p\in P_\tau(M).$  By
the assumption, $M$ is a properly infinite von Neumann algebra and
therefore, there exist pairwise orthogonal projections $p_2,
\ldots , p_k,\ldots$ in $p^\perp M p^\perp$ such that $p_1=p\sim
p_k$
 for all $k\geq 2.$
Let $u_k$ be a partial isometry in $M$ such that $u_k^\ast
u_k=p_1,$ $u_k u_k^\ast=p_k$ for all $k.$ Then
\begin{center}
$ u_k x_n u_k^\ast\rightarrow 0$  as $n\rightarrow\infty$
\end{center}
and
$$
D(u_k x_n u_k^\ast)=D(u_k) x_n u_k^\ast+u_k D(x_n) u_k^\ast +u_k
x_n D(u_k^\ast)\rightarrow u_k y u_k^\ast\geq u_k p u_k^\ast=p_k
$$
for all $k.$

Let $\tau(p)=4\varepsilon.$ Since
$$
u_k x_n u_k^\ast\stackrel{\|\cdot\|}\longrightarrow  0
$$
and
$$
p_kD(u_k x_n u_k^\ast)p_k\stackrel{t_\tau}\longrightarrow  p_ku_k
y u_k^\ast p_k
$$
there exist a projection $q_k\leq p_k$ and a number $n(k)$ such
that
$$
\tau(p_k-q_k)<\varepsilon
$$
and
$$
\|u_k x_{n(k)} u_k^\ast\|<1/k^3, \|(p_kD(u_k x_{n(k)}
u_k^\ast)p_k- p_ku_k y u_k^\ast p_k)q_k\|<1/2.
$$
Taking into account these inequalities and since $D$ is hermitian we
obtain that
\begin{equation}\label{eee}
\tau(q_k)>3\varepsilon
\end{equation}
and
\begin{equation}\label{qqq}
 q_kD(u_k x_{n(k)}u_k^\ast) q_k\geq \frac{1}{2} q_k.
\end{equation}

Now set
$$
x=\sum\limits_{k=1}^\infty k u_k x_{n(k)}u_k^\ast\in S_0(M,
\tau)_b.
$$
Then there exists $\lambda>0$ such that
$$
D(x)\in U(\lambda, \varepsilon).
$$
Therefore
\begin{equation}\label{ggg}
q_kD(x)q_k\in U(\lambda, 2\varepsilon).
\end{equation}

On the other hand,
$$
p_kD(p_kxp_k)p_k=
p_kD(p_k)xp_kp_k+p_kp_kD(x)p_kp_k+p_kp_kxD(p_k)p_k.
$$
Taking into account that
$$
p_kx=xp_k=k p_ku_k x_{n(k)} u_k^\ast p_k=k u_k x_{n(k)} u_k^\ast
$$
and
$$
p_kD(p_k)p_k=0
$$
we obtain
$$
p_kD(x)p_k= p_kD(k u_k x_{n(k)} u_k^\ast )p_k.
$$
Whence
$$
q_kD(x)q_k= q_kD(k u_k x_{n(k)} u_k^\ast )q_k.
$$
By \eqref{qqq} we obtain that
$$
q_kD(x)q_k\geq \frac{k}{2}q_k.
$$
Now using  \eqref{ggg} we have
$$
\frac{k}{2}q_k\in U(\lambda, 2\varepsilon)
$$
for all $k.$ Taking into account \eqref{eee} we have $k/2\leq
\lambda$ for all $k.$ This contradiction shows that $x=0.$ Thus
$D$ is continuous. The proof is complete.
\end{proof}

\section{The Main results}

\medskip

In this section we prove the spatiality  of measure continuous
 derivations on the algebra of $\tau$-compact operators with
respect to a von Neumann algebra and a faithful normal
semi-finite trace.

 Denote by $\mathcal{U}(M)$  and $\mathcal{GN}(M)$ the set of all unitaries in
 $M$ and   the set of all partial isometries  in $M,$ respectively.

 A partial ordering  on the set $\mathcal{GN}(M)$ can be defined
as follows:
$$
u\leq_1 v \Leftrightarrow u u^\ast\leq v v^\ast,\, u= uu^\ast v.
$$
It is clear  that
$$
u\leq_1 v \Leftrightarrow  u^\ast u\leq  v^\ast v,\, u=  v u^\ast
u.
$$

Note that $u^\ast u=r(u)$ is the right support of $u,$ and
$uu^\ast=l(u)$ is the left support of $u.$

Similar to the previous section every derivation $D$ on
$S_0(M, \tau)$ can be represented in the form $D= D_1+ i D_2,$
where
$$
D_1 = (D -D^\ast)/2,\quad D_2  = (D + D^\ast)/2i
$$
 are skew-hermitian derivations on $S_0(M, \tau).$

 It is clear that a derivation $D$   is inner or spatial
  if and only if the both skew-hermitian derivations
$D_1$  and $D_2$ are inner or spatial respectively.

Therefore further in this section we may assume that $D$ is a
skew-hermitian derivation.

Since $S_0(M,\tau)$ is an ideal in $S(M,\tau)$  and algebraic operations on $S(M,\tau)$ are $t_\tau$-continuous, each element $a\in
 S(M, \tau)$  implements a $t_\tau$-continuous derivation on the algebra
  $S_0(M, \tau)$ by the formula
$$
D(x) = ax - xa, \,  x\in S_0(M, \tau).
$$

The main aim of the present paper  is to prove the converse
assertion. Namely, we shall prove the following

\begin{thm}\label{main}
Let $M$ be a    von Neumann algebra with a faithful normal
semi-finite trace $\tau.$ Then every $t_\tau$-continuous
derivation $D:S_0(M, \tau)\rightarrow S_0(M, \tau)$ is spatial and
implemented by an element $a\in S(M, \tau).$
\end{thm}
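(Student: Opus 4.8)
The plan is to reduce the problem to a family of inner derivations on finite corners and then glue the implementing elements into a single $\tau$-measurable operator. As the paper already records, for spatiality we may assume $D$ is skew-hermitian, and correspondingly we shall seek a hermitian implementing element $a$. First I would localise. For each $p\in P_\tau(M)$ the corner $pMp$ is a finite von Neumann algebra carrying the finite trace $\tau_p=\tau|_{pMp}$, so that $pS(M,\tau)p=S(pMp,\tau_p)=S(pMp)$ (the trace being finite), and moreover $pMp\subset S_0(M,\tau)$ since $y=pyp$ satisfies $yp^\perp=0$ with $\tau(p)<\infty$. Define $\delta_p(y)=pD(y)p$ for $y\in pMp$; using the Leibniz rule together with $py=yp=y$ one checks that $\delta_p$ is a derivation from $pMp$ into $S(pMp)$. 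As $D$ is $t_\tau$-continuous and compression is continuous, $\delta_p$ is continuous in the measure topology of $pMp$, and since $pMp$ is measure-dense in $S(pMp)$ it extends to a $t_{\tau_p}$-continuous derivation on $S(pMp)$. Because $pMp$ is finite, the innerness theorems for continuous derivations on $\tau$-measurable operators of \cite{AK2013} and \cite{Ber2013} apply and yield $a_p\in S(pMp)=pS(M,\tau)p$ with $\delta_p(y)=a_py-ya_p$. Taking the hermitian component (legitimate since $\delta_p$ is skew-hermitian) and normalising the central part pins $a_p$ down uniquely.

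Next I would establish coherence of the family $\{a_p\}$. For $p\le q$ in $P_\tau(M)$ and $y\in pMp\subset qMq$ one has $p\delta_q(y)p=\delta_p(y)$, and compressing $\delta_q(y)=a_qy-ya_q$ by $p$ gives $\delta_p(y)=(pa_qp)y-y(pa_qp)$. Thus $pa_qp$ and $a_p$ both implement $\delta_p$ on $pMp$, so by the uniqueness normalisation $pa_qp=a_p$. This is the point at which the ordering $\le_1$ on $\mathcal{GN}(M)$ is useful: using partial isometries $u\in\mathcal{GN}(M)$ with finite-trace initial and final projections I would transport the local data between $\tau$-equivalent corners, so that the normalised elements patch consistently across orthogonal pieces and, in the properly infinite situation, across the copies $p_k$ produced exactly as in the proof of Theorem~\ref{contin}. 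The outcome is a coherent net $\{a_p\}_{p\in P_\tau(M)}$ with $pa_qp=a_p$ whenever $p\le q$, which determines a single closed hermitian operator $a$ affiliated with $M$ satisfying $pap=a_p$ for every $p\in P_\tau(M)$.

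It remains to show $a\in S(M,\tau)$ and that $a$ implements $D$. Fixing an increasing net $p_\alpha\uparrow\mathbf{1}$ in $P_\tau(M)$, the compatibility makes $a=\lim_\alpha a_{p_\alpha}$ well defined as an affiliated operator. I expect the genuine $\tau$-measurability of $a$ to be the main obstacle, since a merely affiliated coherent limit need not lie in $S(M,\tau)$; controlling the growth of $\|a_{p_\alpha}\|$ along the net is precisely what the continuity hypothesis must buy. Concretely I would convert $t_\tau$-continuity into a quantitative estimate: a single neighbourhood $U(\varepsilon,\delta)$ absorbing $D$ of a fixed bounded set uniformly over the corners forces the spectral tail $\mathbf{1}-e_\lambda(\abs{a})$ to have finite trace tending to $0$ as $\lambda\to\infty$, which is exactly the condition $a\in S(M,\tau)$. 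Finally, for $x\in S_0(M,\tau)_b=M\cap S_0(M,\tau)$ and $p\in P_\tau(M)$ with $x=pxp$ one reads off $D(x)=ax-xa$ directly from $\delta_p$ and the relation $pap=a_p$; since $S_0(M,\tau)_b$ is measure-dense in $S_0(M,\tau)$ and both $D$ and $x\mapsto ax-xa$ are $t_\tau$-continuous, they coincide on all of $S_0(M,\tau)$. Hence $D$ is spatial and implemented by $a\in S(M,\tau)$.
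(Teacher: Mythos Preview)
Your outline is in the right spirit---localise to finite corners, obtain inner implementers there, and glue---but it skips precisely the part that carries all the weight, and the sketch you give for it does not constitute a proof.

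The decisive difficulty is showing that the coherent family $\{a_p\}$ assembles into an element of $S(M,\tau)$. Invoking \cite{AK2013} (or \cite{Ber2013}) on each finite corner only yields $a_p\in S(pMp)$, i.e.\ an unbounded operator on a corner of \emph{finite} trace; it gives you no control whatsoever on spectral tails relative to projections of \emph{small} trace complement in $M$. Your sentence ``a single neighbourhood $U(\varepsilon,\delta)$ absorbing $D$ of a fixed bounded set uniformly over the corners forces the spectral tail $\mathbf{1}-e_\lambda(\abs{a})$ to have finite trace tending to $0$'' is exactly the conclusion one needs, but passing from $t_\tau$-bounds on commutators $[a,x]$ to $\tau$-measurability of $a$ itself is nontrivial and is not explained. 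The paper does not rely on the black-box innerness of \cite{AK2013} for this; instead it proves an a priori bound (Lemmas~\ref{bouo}--\ref{boun}): using the sets $\mathcal{F}_n=\{v\in\mathcal{GN}_\tau(M):vv^\ast D(v)v^\ast\ge n\,vv^\ast\}$, the continuity of $D$ forces $\sup\{\tau(uu^\ast):u\in\mathcal{F}_n\}\downarrow 0$, and a Zorn/maximality argument produces projections $p_n$ with $\tau(\mathbf{1}-p_n)\to 0$ on which $D^{(p_n)}$ is \emph{norm}-bounded on $p_nMp_n$. Only then does Sakai's theorem give \emph{bounded} implementers $a_n\in p_nMp_n$, and it is this boundedness together with $\tau(\mathbf{1}-p_n)\to 0$ that makes the gluing land in $S(M,\tau)$. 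Your route bypasses this work and then has no mechanism to recover it.

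A secondary but real gap is your ``normalising the central part pins $a_p$ down uniquely.'' For an unbounded hermitian $a_p\in S(pMp)$ there is no canonical way to kill the central ambiguity (the centre-valued trace is not available), so the automatic coherence $pa_qp=a_p$ you claim does not follow. The paper does not attempt a canonical normalisation; it fixes coherence \emph{inductively}, adjusting by explicit central elements $f_k$ at each step along a single increasing sequence $q_k\uparrow\mathbf{1}$, and then shows the resulting $\{c_k\}$ is $t_\tau$-Cauchy. Finally, your closing step assumes every $x\in S_0(M,\tau)_b$ satisfies $x=pxp$ for some $p\in P_\tau(M)$, which is false; one must argue via the $t_\tau$-dense set of elements with $\tau$-finite support, as the paper does.
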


For the proof of this theorem we need several lemmata.

Set
$$
 \mathcal{GN}_\tau(M)=\{u\in \mathcal{GN}(M): uu^\ast\in
 P_\tau(M)\}.
$$
It is clear that
$$
P_\tau(M)\subset  \mathcal{GN}_\tau(M)\subset S_0(M, \tau)_b.
$$

The following three lemmata have been proved in \cite{AK2013} in
the case of finite von Neumann algebras, but the proofs are similar
in the semi-finite case.

\begin{lem}\label{duu}  For every $v\in \mathcal{GN}_\tau(M)$
the element $vv^\ast D(v)v^\ast$ is hermitian.
\end{lem}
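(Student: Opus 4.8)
The plan is to show directly that $a:=vv^\ast D(v)v^\ast$ satisfies $a^\ast=a$. Write $p=vv^\ast$ and $q=v^\ast v$ for the left and right supports of the partial isometry $v$; since $vv^\ast\in P_\tau(M)$ we have $v,v^\ast\in S_0(M,\tau)_b$, so all operators appearing below lie in $S_0(M,\tau)$ and $D$ applies to each of them. Taking adjoints gives $a^\ast=v\,(D(v))^\ast\,p$. Because in this section $D$ is skew-hermitian, i.e. $D^\ast=-D$ and hence $(D(v))^\ast=-D(v^\ast)$, this becomes $a^\ast=-v\,D(v^\ast)\,p$. Thus it suffices to establish the single operator identity $v\,D(v^\ast)\,p=-a$.

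To obtain this identity I would apply the Leibniz rule to the partial-isometry relation $v=v v^\ast v$:
\[
D(v)=D(v)\,v^\ast v+v\,D(v^\ast)\,v+v v^\ast\,D(v).
\]
Multiplying on the left by $p=vv^\ast$ and using $pv=v$ together with $p^2=p$, the last term reproduces $pD(v)$ and cancels the left-hand side, leaving
\[
0=p\,D(v)\,q+v\,D(v^\ast)\,v,
\]
that is, $v\,D(v^\ast)\,v=-p\,D(v)\,q$. Multiplying this on the right by $v^\ast$ and using $v^\ast v v^\ast=v^\ast$ (so that $qv^\ast=v^\ast$ and $vv^\ast=p$) yields exactly $v\,D(v^\ast)\,p=-p\,D(v)\,v^\ast=-a$.

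Combining the two steps gives $a^\ast=-v\,D(v^\ast)\,p=a$, as required. The argument uses only the Leibniz rule, the defining relations $vv^\ast v=v$ and $v^\ast v v^\ast=v^\ast$ of a partial isometry, and the skew-hermiticity $D^\ast=-D$; the hypothesis $vv^\ast\in P_\tau(M)$ enters solely to guarantee that $v$ and the products formed from it belong to $S_0(M,\tau)$, so that $D$ is defined on them. I do not expect a serious obstacle here: the only point needing care is the bookkeeping with the support projections — ensuring that after each multiplication the correct factors collapse ($pv=v$, $qv^\ast=v^\ast$, $p^2=p$) so that precisely the self-reproducing term cancels and one is left with the desired identity.
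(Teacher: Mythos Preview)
Your proof is correct. The paper does not actually reproduce a proof of this lemma; it merely cites \cite{AK2013} and remarks that the finite-case argument there carries over verbatim. Your computation---applying the Leibniz rule to $v=vv^\ast v$, cancelling the self-reproducing term after left-multiplication by $p=vv^\ast$, and then invoking the standing skew-hermiticity $(D(v))^\ast=-D(v^\ast)$---is exactly the natural direct argument and is in the same spirit as (and presumably essentially identical to) the proof in the cited reference.
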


\begin{lem}\label{five}  Let $n\in \mathbb{N}$ be a fixed number and
let  $v\in \mathcal{GN}_\tau(M)$ be  a partially isometry. Then
$$
vv^\ast D(v)v^\ast\geq n vv^\ast
$$
if and only if
$$
v^\ast vD(v^\ast) v\leq -n v^\ast v.
$$
\end{lem}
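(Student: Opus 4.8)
I need to prove Lemma 3.5 (labeled "five"): for $v \in \mathcal{GN}_\tau(M)$ and $n \in \mathbb{N}$,
$$vv^\ast D(v)v^\ast \geq n vv^\ast \iff v^\ast v D(v^\ast) v \leq -n v^\ast v.$$

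Let me understand the setup. $D$ is a skew-hermitian derivation, so $D^\ast = -D$, meaning $(D(x^\ast))^\ast = -D(x)$, i.e. $D(x^\ast) = -(D(x))^\ast$.

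Let me write $p = vv^\ast = l(v)$ (left support) and $q = v^\ast v = r(v)$ (right support). So $v = pv = vq = pvq$, and $v^\ast = qv^\ast = v^\ast p$.

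The key algebraic fact: since $v$ is a partial isometry, $vv^\ast v = v$, so $vq = v$ and $pv = v$. Applying $D$ to $v v^\ast v = v$:
$$D(v)v^\ast v + v D(v^\ast) v + v v^\ast D(v) = D(v).$$

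Let me denote the LHS hermitian element $A = vv^\ast D(v) v^\ast = p D(v) v^\ast$. By Lemma 3.4 (duu), $A$ is hermitian. The RHS element is $B = v^\ast v D(v^\ast) v = q D(v^\ast) v$.

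The claim is $A \geq np \iff B \leq -nq$.

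**Strategy.** I want to relate $A$ and $B$. Consider multiplying the differentiated identity $v v^\ast v = v$ appropriately. Let me differentiate $v^\ast v = q$ (a projection) and $vv^\ast = p$ (a projection).

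From $D(v^\ast v) = D(q)$: $D(v^\ast)v + v^\ast D(v) = D(q)$. Multiply left by $q$, right by $q$... Actually the cleaner route: relate $A$ to $B$ via conjugation by $v$.

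Let me compute $v^\ast A v = v^\ast p D(v) v^\ast v = v^\ast D(v) q$ (using $v^\ast p = v^\ast$ and $v^\ast v = q$). And compute how this relates to $B = q D(v^\ast) v$.

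Now from $D(q) = D(v^\ast)v + v^\ast D(v)$, sandwich: $q D(q) q = q D(v^\ast) v + v^\ast D(v) q$ — wait, need $q D(v^\ast) v q$. Since $v = vq$, $v^\ast D(v) q = v^\ast D(v) q$. Hmm, $B = qD(v^\ast)v$ and $v^\ast A v = v^\ast D(v) q$. These look like adjoints: $(v^\ast A v)^\ast = v^\ast A^\ast v = v^\ast A v$ since $A$ hermitian. And $(qD(v^\ast)v)^\ast = v^\ast D(v^\ast)^\ast q = v^\ast(-D(v))q$... using skew-hermitian $D(v^\ast)^\ast = -D(v)$. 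So $B^\ast = -v^\ast D(v) q = -v^\ast A v$. Thus $B^\ast = -v^\ast A v$, giving $B = -v^\ast A v$ (taking adjoint, since $v^\ast A v$ hermitian).

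**So $B = -v^\ast A v$.** Now the equivalence follows: if $A \geq np$ then $v^\ast A v \geq n v^\ast p v = n v^\ast v = nq$ (since $v^\ast p = v^\ast$), hence $B = -v^\ast A v \leq -nq$. Conversely, if $B \leq -nq$, then $A \geq np$ follows by conjugating back with $v$: $vBv^\ast = -vv^\ast A vv^\ast = -pAp = -A$ (need $pAp = A$, which holds since $A = pD(v)v^\ast$ and $v^\ast p = v^\ast$, plus left $p$; check $Ap = pD(v)v^\ast p = pD(v)v^\ast = A$). So $A = -vBv^\ast \geq -v(-nq)v^\ast = nvqv^\ast = np$.

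\begin{proof}
Write $p = vv^\ast$ and $q = v^\ast v$, so that $v = pv = vq$ and $v^\ast = qv^\ast = v^\ast p$. Set
$$
A = vv^\ast D(v)v^\ast = pD(v)v^\ast, \qquad B = v^\ast vD(v^\ast)v = qD(v^\ast)v.
$$
By Lemma~\ref{duu} the element $A$ is hermitian, and since $vv^\ast = p$ is a projection one checks directly that $pA = Ap = A$. The claimed equivalence is precisely $A \geq np \Leftrightarrow B \leq -nq$.

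The key identity is $B = -v^\ast Av$. Indeed, using $v^\ast p = v^\ast$ and $v^\ast v = q$ we have
$$
v^\ast Av = v^\ast D(v)v^\ast v = v^\ast D(v)q.
$$
Since $A$ is hermitian, so is $v^\ast Av$. On the other hand, because $D$ is skew-hermitian we have $(D(v^\ast))^\ast = -D(v)$, and therefore, using $v^\ast = v^\ast p$,
$$
B^\ast = \left(qD(v^\ast)v\right)^\ast = v^\ast\left(D(v^\ast)\right)^\ast q = -v^\ast D(v)q = -v^\ast Av.
$$
Taking adjoints and recalling that $v^\ast Av$ is hermitian yields $B = -v^\ast Av$, as asserted.

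Now suppose $A \geq np$. Conjugating by $v^\ast$ on the left and $v$ on the right, and using $v^\ast pv = v^\ast v = q$, we obtain $v^\ast Av \geq n v^\ast pv = nq$, whence $B = -v^\ast Av \leq -nq$. Conversely, suppose $B \leq -nq$. Conjugating by $v$ on the left and $v^\ast$ on the right gives $vBv^\ast \leq -n\, vqv^\ast = -np$. But
$$
vBv^\ast = -v v^\ast Av v^\ast = -pAp = -A,
$$
so $-A \leq -np$, that is, $A \geq np$. This completes the proof of the equivalence.
\end{proof}
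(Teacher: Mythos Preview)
Your proof is correct. The paper does not actually prove this lemma; it records it (together with Lemmas~\ref{duu} and \ref{orto}) as having been established in \cite{AK2013} for finite von Neumann algebras, with the remark that the semi-finite case is similar. Your argument---deriving the identity $B=-v^\ast A v$ from the skew-hermitian property of $D$ and Lemma~\ref{duu}, and then conjugating the inequality by $v$ and $v^\ast$---is a natural and efficient way to verify the equivalence.
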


\begin{lem}\label{orto}  Let   $v_1\in \mathcal{GN}_\tau(M)$ be
a partially isometry  and let  $v_2\in \mathcal{GN}_\tau(pMp),$
where \linebreak  $p=\mathbf{1}- v_1v_1^\ast \vee v_1^\ast v_1
\vee s(iD(v_1v_1^\ast))\vee s(iD(v_1^\ast v_1))$ and $s(x)$
denotes the support of a hermitian element $x.$ Then
$$
 (v_1+v_2)(v_1+v_2)^\ast D(v_1+v_2)(v_1+v_2)^\ast=v_1v_1^\ast D(v_1)v_1^\ast
+v_2v_2^\ast D(v_2)v_2^\ast.
$$
\end{lem}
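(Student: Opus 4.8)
\textbf{Proof proposal for Lemma~\ref{orto}.}

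The plan is to expand the left-hand side by the Leibniz rule and then use the orthogonality built into the projection $p$ to annihilate all cross terms. First I would write $v=v_1+v_2$ and note the key orthogonality relations forced by the definition of $p$. Since $v_2\in\mathcal{GN}_\tau(pMp)$, both $v_2v_2^\ast$ and $v_2^\ast v_2$ lie under $p$, and $p$ is orthogonal to $v_1v_1^\ast$, to $v_1^\ast v_1$, to $s(iD(v_1v_1^\ast))$ and to $s(iD(v_1^\ast v_1))$. Consequently $v_1^\ast v_2=v_1v_2^\ast=v_2^\ast v_1=v_2v_1^\ast=0$, so that $(v_1+v_2)(v_1+v_2)^\ast=v_1v_1^\ast+v_2v_2^\ast$ and similarly $(v_1+v_2)^\ast(v_1+v_2)=v_1^\ast v_1+v_2^\ast v_2$. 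The two range projections are mutually orthogonal, and so are the two source projections.

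Next I would apply $D$ to $v=v_1+v_2$, which by linearity gives $D(v)=D(v_1)+D(v_2)$, and then form the product
\begin{equation*}
vv^\ast D(v)v^\ast=(v_1v_1^\ast+v_2v_2^\ast)\bigl(D(v_1)+D(v_2)\bigr)(v_1^\ast+v_2^\ast).
\end{equation*}
Multiplying out yields eight terms. The desired identity is that only the two ``diagonal'' terms $v_1v_1^\ast D(v_1)v_1^\ast$ and $v_2v_2^\ast D(v_2)v_2^\ast$ survive. The main work is therefore to show that each of the six mixed terms vanishes. The terms of the form $v_iv_i^\ast D(v_j)v_j^\ast$ with $i\neq j$ are handled by establishing that $v_iv_i^\ast D(v_j)v_j^\ast=0$; the crux here is to control the factor $D(v_j)$ between the orthogonal projections. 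The natural tool is the product rule applied to the identities $v_j=v_jv_j^\ast v_j$ and, using that $v_j$ is a partial isometry, to relate $v_iv_i^\ast D(v_j)$ and $D(v_j)v_j^\ast$ to the derivatives of the support projections $D(v_jv_j^\ast)$ and $D(v_j^\ast v_j)$. This is exactly where the extra summands $s(iD(v_1v_1^\ast))$ and $s(iD(v_1^\ast v_1))$ in $p$ enter: they guarantee that $p$ absorbs the derivative of $v_1$'s supports, so that multiplying by $v_2v_2^\ast\leq p$ or by $v_2^\ast v_2\leq p$ kills the corresponding cross terms.

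The main obstacle, and the step I would spend the most care on, is verifying the cross-term $v_1v_1^\ast D(v_2)v_1^\ast$ (and its transposes) vanishes: the range of $v_2$ lies under $p$, but $D(v_2)$ need not, so one cannot simply invoke orthogonality of the outer projections. I would handle this by writing $v_2=v_2v_2^\ast v_2$, differentiating to get $D(v_2)=D(v_2v_2^\ast)v_2+v_2v_2^\ast D(v_2)$, and observing that $v_1v_1^\ast D(v_2v_2^\ast)$ can be rewritten via the hermiticity/support arguments of Lemma~\ref{duu} so that the factor falls under $p$ on the left; the right factor $v_1^\ast$ then pairs against $v_2^\ast v_2\leq p$ to vanish. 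The symmetric manipulation, differentiating $v_2=v_2v_2^\ast v_2$ on the right, disposes of the terms ending in $D(v_2)v_1^\ast$ and $D(v_1)v_2^\ast$. Once all six mixed terms are shown to be zero, the identity follows immediately by collecting the two diagonal terms, completing the proof.
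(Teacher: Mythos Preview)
Your overall strategy---expand $(v_1v_1^\ast+v_2v_2^\ast)(D(v_1)+D(v_2))(v_1^\ast+v_2^\ast)$ and kill the six cross terms using the orthogonality built into $p$---is the right one. The paper does not supply its own proof here but refers to \cite{AK2013}, where the finite case is handled along exactly these lines, so there is nothing substantively different to compare.

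There is, however, one step in your write-up that would not go through as stated. You claim that $v_1v_1^\ast D(v_2v_2^\ast)$ can be disposed of ``via the hermiticity/support arguments of Lemma~\ref{duu}''. Lemma~\ref{duu} only says that $vv^\ast D(v)v^\ast$ is hermitian; it gives no control over $v_1v_1^\ast D(v_2v_2^\ast)$. More to the point, the hypotheses on $p$ bound the supports of $iD(v_1v_1^\ast)$ and $iD(v_1^\ast v_1)$, \emph{not} of $iD(v_2v_2^\ast)$, so a direct support argument for the latter is unavailable. The clean fix is to differentiate the orthogonality relation $v_1v_1^\ast\cdot v_2v_2^\ast=0$:
\[
D(v_1v_1^\ast)\,v_2v_2^\ast + v_1v_1^\ast\,D(v_2v_2^\ast)=0.
\]
The first summand vanishes because $v_2v_2^\ast\le p$ and $p\perp s(iD(v_1v_1^\ast))$; hence $v_1v_1^\ast D(v_2v_2^\ast)=0$, which kills the cross term $v_1v_1^\ast D(v_2)v_2^\ast$. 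The symmetric differentiation of $v_1^\ast v_1\cdot v_2^\ast v_2=0$ handles $v_2v_2^\ast D(v_2)v_1^\ast$. The remaining four cross terms fall out exactly as you indicate, from the Leibniz decompositions $D(v_j)=D(v_jv_j^\ast)v_j+v_jv_j^\ast D(v_j)$ and $D(v_j)=D(v_j)v_j^\ast v_j+v_jD(v_j^\ast v_j)$ together with the elementary orthogonalities $v_2=pv_2p$, $v_1v_1^\ast,\,v_1^\ast v_1\le p^\perp$ and $p\perp s(iD(v_1v_1^\ast))\vee s(iD(v_1^\ast v_1))$. With this correction the proof is complete.
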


For each  $n\in \mathbb{N}$ consider the set
$$
\mathcal{F}_n=\{v\in \mathcal{GN}_\tau(M): vv^\ast D(v)v^\ast\geq
n vv^\ast\}.
$$
Note that $0\in \mathcal{F}_n$,  so $\mathcal{F}_n$ is not empty.

\begin{lem}\label{bouo}  Let $\varepsilon_n=\sup\{\tau(uu^\ast): u\in
\mathcal{F}_n\}.$
Then $\varepsilon_n\downarrow 0.$
\end{lem}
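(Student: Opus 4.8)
The plan is to establish monotonicity first and then force the limit to be $0$ by contradiction, using only the $t_\tau$-continuity of $D$ together with Lemma~\ref{duu}. Since $vv^\ast\ge 0$, any $v\in\mathcal{F}_{n+1}$ satisfies $vv^\ast D(v)v^\ast\ge(n+1)vv^\ast\ge nvv^\ast$, so $\mathcal{F}_{n+1}\subseteq\mathcal{F}_n$; hence $\{\varepsilon_n\}$ is non-increasing and bounded below by $0$, and converges to some $\varepsilon:=\lim_n\varepsilon_n\ge 0$. Everything then reduces to excluding the case $\varepsilon>0$.

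So suppose $\varepsilon>0$. For each $n$, since $\varepsilon_n\ge\varepsilon>\varepsilon/2$, the definition of the supremum lets me choose $v_n\in\mathcal{F}_n$ with $\tau(v_nv_n^\ast)>\varepsilon/2$. Write $q_n=v_nv_n^\ast\in P_\tau(M)$, so that $\tau(q_n)>\varepsilon/2$ and, by the definition of $\mathcal{F}_n$, $q_nD(v_n)q_n=v_nv_n^\ast D(v_n)v_n^\ast\ge nq_n$, this operator being hermitian by Lemma~\ref{duu}. The decisive move is to rescale: set $w_n=\frac1n v_n\in S_0(M,\tau)_b$. Then $\|w_n\|\le 1/n\to 0$, and as the norm topology on $S_0(M,\tau)_b$ is stronger than $t_\tau$ we get $w_n\stackrel{t_\tau}{\longrightarrow}0$; continuity of $D$ then forces $D(w_n)=\frac1n D(v_n)\stackrel{t_\tau}{\longrightarrow}0$. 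Compressing the defining inequality by the factor $\frac1n$ gives, however, $q_nD(w_n)q_n\ge q_n$.

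To extract the contradiction I would test the convergence $D(w_n)\to 0$ against the fixed neighborhood $U(1/2,\varepsilon/4)$: for all large $n$ there is a projection $e_n\in P(M)$ with $\tau(e_n^\perp)<\varepsilon/4$, $D(w_n)e_n\in M$ and $\|D(w_n)e_n\|<1/2$. Setting $f_n=q_n\wedge e_n\le q_n$ and using the standard estimate $\tau(q_n-q_n\wedge e_n)\le\tau(e_n^\perp)$, I obtain $\tau(f_n)>\varepsilon/2-\varepsilon/4=\varepsilon/4>0$, so $f_n\neq 0$. Compressing $q_nD(w_n)q_n\ge q_n$ by $f_n$ yields $f_nD(w_n)f_n\ge f_n$, and since this element is hermitian (Lemma~\ref{duu}) and dominates the nonzero projection $f_n$, its norm is $\ge 1$. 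On the other hand $f_n\le e_n$ gives $f_nD(w_n)f_n=f_n\big(D(w_n)e_n\big)f_n$, whence $\|f_nD(w_n)f_n\|\le\|D(w_n)e_n\|<1/2$ --- a contradiction. Therefore $\varepsilon=0$, i.e. $\varepsilon_n\downarrow 0$.

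I expect the only delicate points to be the two facts underpinning the final estimate: that $f_n=q_n\wedge e_n$ still carries trace bounded away from $0$ (which needs the lower bound on $\tau(q_n)$ together with $\tau(q_n-q_n\wedge e_n)\le\tau(e_n^\perp)$), and that the compressed operators are genuinely self-adjoint, so that both the order inequality $f_nD(w_n)f_n\ge f_n$ and the norm bound are meaningful --- this is precisely where Lemma~\ref{duu} is needed. The conceptual heart is the rescaling $w_n=v_n/n$, which converts the statement ``$D$ attains values of order $n$ on partial isometries of support at least $\varepsilon/2$'' into a violation of the continuity of $D$ at the origin.
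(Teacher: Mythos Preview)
Your proof is correct. The overall strategy matches the paper's: establish $\mathcal{F}_{n+1}\subseteq\mathcal{F}_n$ for monotonicity, then assume $\varepsilon_n\ge 2\varepsilon>0$, pick $v_n\in\mathcal{F}_n$ with $\tau(v_nv_n^\ast)$ bounded below, and use the $t_\tau$-continuity of $D$ to reach a contradiction from $v_nv_n^\ast D(v_n)v_n^\ast\ge n\,v_nv_n^\ast$.

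The extraction of the contradiction is where you and the paper diverge. The paper argues that the sequence $\{v_nv_n^\ast D(v_n)v_n^\ast\}$ is \emph{unbounded} in $t_\tau$: fixing $c>0$ and assuming $v_nv_n^\ast D(v_n)v_n^\ast\in U(c\delta,\delta)$ with $\delta=\varepsilon/2$, it invokes a spectral lemma (\cite[Lemma~2.2.4]{Mur}) to compare spectral projections with the projection $p$ from the neighborhood, concluding $v_nv_n^\ast\preceq p^\perp$ and hence $\varepsilon\le\tau(p^\perp)<\varepsilon/2$. This unboundedness then clashes with the fact that continuity of $D$ forces $\{xx^\ast D(x)x^\ast:\|x\|\le 1\}$ to be $t_\tau$-bounded. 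Your route is more direct: rescaling $w_n=v_n/n$ turns the issue into the \emph{convergence} $D(w_n)\to 0$, and you test this against a single fixed neighborhood $U(1/2,\varepsilon/4)$, using the Kaplansky estimate $\tau(q_n-q_n\wedge e_n)\le\tau(e_n^\perp)$ to keep $f_n=q_n\wedge e_n$ nonzero and then reading off $1\le\|f_nD(w_n)f_n\|<1/2$. Your argument avoids both the external spectral lemma and the appeal to images of bounded sets under continuous (non-linear) maps, at the cost of one extra compression step; the paper's version is closer to the ``bounded image'' philosophy one would use to generalize to other uniform estimates.
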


\begin{proof}
Since  $\mathcal{F}_n\supset \mathcal{F}_{n+1}$ we have that
$\varepsilon_n\downarrow.$
 Let us show that $\varepsilon_n\downarrow 0.$
 Let us suppose the opposite, e.g.
there exists a number $\varepsilon>0$  such that
$\varepsilon_n\geq 2\varepsilon$ for all $n\in \mathbb{N}.$ There
exists element $v_n \in \mathcal{F}_n$ such that
$$
\tau(v_{n} v_{n}^\ast)\geq \varepsilon
$$
for all $n\geq1.$  Since $v_{n}\in \mathcal{F}_{n}$  we have
\begin{equation}\label{inq}
 v_{n}v_{n}^{\ast} D(v_{n})v_{n}^{\ast} \geq n v_{n}v_{n}^{\ast}
 \end{equation}
for all $n\geq1.$

Now take an arbitrary number $c>0$ and let $n$ be a number such
that $n>c\delta,$ where $\delta=\frac{\textstyle
\varepsilon}{\textstyle 2}.$ Suppose that
$$
v_{n}v_{n}^{\ast} D(v_{n})v_{n}^{\ast}\in
cU\left(\delta,\delta\right)= U\left(c\delta,\delta\right).
$$
Then there exists a projection $p\in M$ such that
\begin{equation}\label{eps}
||v_{n}v_{n}^{\ast}  D(v_{n})v_{n}^{\ast} p||<c\delta,\,
\tau(p^\perp)<\delta.
\end{equation}
Let $v_{n}v_{n}^{\ast}
D(v_{n})v_{n}^{\ast}=\int\limits_{-\infty}^{+\infty}\lambda \, d\,
e_{\lambda}$ be the spectral resolution of $v_{n}v_{n}^{\ast}
D(v_{n})v_{n}^{\ast}.$ From \eqref{eps} using \cite[Lemma
2.2.4]{Mur} we obtain that $e^\perp_{c\delta}\preceq p^\perp.$
Taking into account \eqref{inq} we have that
$v_{n}v_{n}^{\ast}\leq e^\perp_{n}.$ Since $n>c\delta$ it follows
that $e^\perp_{n}\leq e^\perp_{c\delta}.$ So
$$
v_{n}v_{n}^{\ast} \leq e^\perp_{n}\leq e^\perp_{c\delta}\preceq
p^\perp.
$$
Thus
$$
\varepsilon\leq \tau(v_{n}v_{n}^{\ast} )\leq
\tau(p^\perp)<\delta=\frac{\varepsilon}{2}.
$$
This contradiction implies that
$$
v_{n}v_{n}^{\ast} D(v_{n})v_{n}^{\ast}\notin
cU\left(\delta,\delta\right)
$$
for all $n>c\delta.$ Since $c>0$ is arbitrary it follows that the
sequence $\{v_{n}v_{n}^{\ast} D(v_{n})v_{n}^{\ast}\}_{n\geq1}$ is
unbounded in the measure topology. Therefore the set $\{vv^\ast
D(v)v^\ast: v\in \mathcal{GN}_\tau(M)\}$ is also unbounded in the
measure topology.

 On the other hand, the continuity of the derivation $D$
implies that the set $\{xx^\ast D(x)x^\ast: \|x\|\leq 1\}$ is
bounded in the measure topology. In particular,  the set
$\{uu^\ast D(u)u^\ast: u\in \mathcal{GN}_\tau(M)\}$ is also
bounded in the measure topology. This contradiction implies that
$\varepsilon_n\downarrow 0.$ The proof is complete.
\end{proof}

Lemma~\ref{bouo} implies that  there exists a number $k\in
\mathbb{N}$  such that $\varepsilon_n<+\infty$ for all  $n\geq
k.$

\begin{lem}\label{bout}
For $n\geq k$  the set $\mathcal{F}_n$ has a maximal element with respect to the
partial ordering $\leq_1.$
\end{lem}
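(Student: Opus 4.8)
The plan is to apply Zorn's lemma to the partially ordered set $(\mathcal{F}_n,\leq_1)$ for $n\geq k$, so the whole task reduces to showing that every chain in $\mathcal{F}_n$ has an upper bound lying in $\mathcal{F}_n$. Let $\{v_\alpha\}$ be such a chain. Since $u\leq_1 w$ forces $uu^\ast\leq ww^\ast$ and $u^\ast u\leq w^\ast w$, the left supports $l(v_\alpha)=v_\alpha v_\alpha^\ast$ and the right supports $r(v_\alpha)=v_\alpha^\ast v_\alpha$ form increasing nets of projections; put $e=\sup_\alpha l(v_\alpha)$ and $r=\sup_\alpha r(v_\alpha)$. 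Because each $v_\alpha\in\mathcal{GN}_\tau(M)$ we have $\tau(l(v_\alpha))\leq\varepsilon_n$, and here is the first place where $n\geq k$ (hence $\varepsilon_n<\infty$) is used: by normality of $\tau$ one gets $\tau(e)=\sup_\alpha\tau(l(v_\alpha))\leq\varepsilon_n<\infty$, so $e\in P_\tau(M)$, and likewise $r\in P_\tau(M)$ since $\tau(r(v_\alpha))=\tau(l(v_\alpha))$.

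Next I would build the candidate upper bound. The ordering $\leq_1$ makes the chain coherent: for $v_\alpha\leq_1 v_\beta$ one has $v_\alpha=l(v_\alpha)v_\beta=v_\beta r(v_\alpha)$. Hence there is a well-defined partial isometry $v$ determined by $v\, r(v_\alpha)=v_\alpha$ for every $\alpha$, with $r(v)=r$ and $l(v)=e$; in particular $v\in\mathcal{GN}_\tau(M)$ because $\tau(l(v))=\tau(e)<\infty$. From $v_\alpha=v\, r(v_\alpha)$ and $v=vr$ we get $v-v_\alpha=v\big(r-r(v_\alpha)\big)$, so the right support of $v-v_\alpha$ is dominated by $r-r(v_\alpha)$; consequently $(v-v_\alpha)\big(\mathbf{1}-(r-r(v_\alpha))\big)=0$ while $\tau(r-r(v_\alpha))=\tau(e)-\tau(l(v_\alpha))\to 0$. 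This shows $v_\alpha\to v$ in the measure topology $t_\tau$ (as a net). Moreover $v_\alpha=l(v_\alpha)v$ with $l(v_\alpha)\leq e=l(v)$, so $v_\alpha\leq_1 v$, i.e. $v$ is an upper bound for the chain.

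It remains to check $v\in\mathcal{F}_n$. Since $D$ is $t_\tau$-continuous and multiplication and involution are $t_\tau$-continuous on $S(M,\tau)$, we have $v_\alpha v_\alpha^\ast D(v_\alpha)v_\alpha^\ast\to vv^\ast D(v)v^\ast$ and $v_\alpha v_\alpha^\ast=l(v_\alpha)\to e=vv^\ast$ in $t_\tau$. Each difference $v_\alpha v_\alpha^\ast D(v_\alpha)v_\alpha^\ast-n\,v_\alpha v_\alpha^\ast$ is nonnegative by definition of $\mathcal{F}_n$, and it converges in $t_\tau$ to $vv^\ast D(v)v^\ast-n\,vv^\ast$. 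As the positive cone of $S_0(M,\tau)$ is closed in the measure topology, the limit is nonnegative, which is exactly $vv^\ast D(v)v^\ast\geq n\,vv^\ast$; thus $v\in\mathcal{F}_n$. Zorn's lemma then yields a maximal element.

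I expect the main obstacle to be the convergence bookkeeping: verifying that the coherent family $\{v_\alpha\}$ genuinely assembles into a single partial isometry $v\in\mathcal{GN}_\tau(M)$ and that $v_\alpha\to v$ in $t_\tau$ as a net (not merely along a sequence), together with the passage to the limit in the operator inequality. The finiteness $\varepsilon_n<\infty$ and the normality of $\tau$ are precisely what guarantee that $e=\sup_\alpha l(v_\alpha)$ has finite trace, keeping the limit inside $\mathcal{GN}_\tau(M)$; without the hypothesis $n\geq k$ this control would fail.
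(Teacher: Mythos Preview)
Your proof is correct and follows the same overall strategy as the paper: apply Zorn's lemma by showing that every chain in $\mathcal{F}_n$ has an upper bound in $\mathcal{F}_n$, constructing the upper bound as a limit partial isometry and passing to the limit in the defining inequality via the $t_\tau$-continuity of $D$ and closedness of the positive cone. The one point of departure is how the limit $v$ is obtained. The paper works inside the corner $eMe$ (with $e=p\vee q$, $p=\sup_\alpha l(v_\alpha)$, $q=\sup_\alpha r(v_\alpha)$, both of finite trace) and shows that $\{v_\alpha\}$ is Cauchy in the Hilbert norm $\|x\|_2=\sqrt{\tau(x^\ast x)}$; completeness of $L^2$ then produces $v$ and $t_\tau$-convergence simultaneously. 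You instead invoke directly the existence of the supremum of a $\leq_1$-chain of partial isometries (a standard fact from comparison theory) and then verify $t_\tau$-convergence by the elementary support estimate $v-v_\alpha=v(r-r(v_\alpha))$ together with $\tau(r-r(v_\alpha))\to 0$. Both routes hinge on the same crucial input, $\varepsilon_n<\infty$, which forces the supremum projections to have finite trace; your argument is slightly more elementary in that it avoids $L^2$-completeness, while the paper's approach has the advantage of producing $v$ and its $t_\tau$-convergence in one stroke without a separate appeal to the lattice-theoretic existence of $\sup_{\leq_1}v_\alpha$.
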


\begin{proof}
Let $\{v_\alpha\}\subset \mathcal{F}_n$ be a totally ordered net.
Then
$$
v_\alpha v_\alpha^\ast\uparrow p,  \, v_\alpha^\ast
v_\alpha\uparrow q,
$$
where $p, q\in P(M).$ Since $\tau(v_\alpha
v_\alpha^\ast)\leq\varepsilon_n$ for all $\alpha,$ we have
$\tau(p), \tau(q)\leq\varepsilon_n.$ Note that $v_\alpha\in eMe,$
where $e=p\vee q.$ Consider the $L_2$-norm
$$
\|x\|_2=\sqrt{\tau(x^\ast x)},\, x\in eMe.
$$

Let us show that $v_\alpha \stackrel{t_\tau}\longrightarrow v$
for some $v \in \mathcal{F}_n.$ For $\alpha\leq \beta$ we have
\begin{eqnarray*}
\|v_\beta-v_\alpha\|_2 & =& \|l(v_\beta)v_\beta-l(v_\alpha)v_\beta\|_2= \\
& = & \|(l(v_\beta)-l(v_\alpha))v_\beta\|_2\leq
\|l(v_\beta)-l(v_\alpha)\|_2\|v_\beta\|= \\
& = & \sqrt{\tau(l(v_\beta)-l(v_\alpha))}\rightarrow 0,
\end{eqnarray*}
because $\{l(v_\alpha)\}$ is an increasing net of projections.
Thus $\{v_\alpha\}$ is a $\|\cdot\|_2$-fundamental, and hence
there exists an element $v$ in the unit ball $eMe$ such that
$v_\alpha\stackrel{\|\cdot\|_2}\longrightarrow v.$ Therefore
$v_\alpha\stackrel{t_\tau}\longrightarrow v,$ and thus we have
$$
v_\alpha  v_\alpha^\ast \stackrel{t_\tau}\longrightarrow v
v^\ast,\, v_\alpha^\ast v_\alpha\stackrel{t_\tau}\longrightarrow
v^\ast  v.
$$
 Therefore
$$
v v^\ast,  \, v^\ast v\in P_\tau(M).
$$
Thus  $v\in \mathcal{GN}_\tau(M).$

Since   $v_\alpha=v_\alpha v_\alpha^\ast v_\beta$ for all
$\beta\geq \alpha$ we have that $v_\alpha=v_\alpha v_\alpha^\ast
v.$ So $v_\alpha\leq_1 v$ for all $\alpha.$ Since
$v_\alpha\stackrel{t_\tau}\longrightarrow v$ by
${t_\tau}$-continuity of $D$ we have that
$D(v_\alpha)\stackrel{t_\tau}\longrightarrow D(v).$ Taking into
account that $v_\alpha v_\alpha^\ast D(v_\alpha)v_\alpha^\ast\geq
n v_\alpha v_\alpha^\ast$ we obtain $v v^{\ast} D(v)v^{\ast}\geq n
v v^{\ast},$ i.e. $v\in \mathcal{F}_n.$

So, any totally ordered net  in $\mathcal{F}_n$ has the least
upper bound. By Zorn`s Lemma $\mathcal{F}_n$ has a maximal
element, say $v_n.$ The proof is complete.
\end{proof}

 The
following lemma is  one of the key steps in the  proof of the main
result.

\begin{lem}\label{boun}  Let $M$ be a
von Neumann algebra  with a faithful normal semi-finite trace
$\tau.$ There exists a sequence of projections $\{p_n\}$ in $M$
with $\tau(\mathbf{1}-p_n)\rightarrow 0$ such that that
$$
\|vv^\ast D(v)v^\ast\|\leq n
$$
for all $v\in
p_n\mathcal{GN}_\tau(M)p_n=\mathcal{GN}_\tau(p_nMp_n).$
\end{lem}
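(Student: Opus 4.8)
The plan is to reduce the desired norm estimate to a statement about the sets $\mathcal{F}_n$, and then to manufacture the projections $p_n$ from the maximal elements supplied by Lemma~\ref{bout}. The reduction I would prove first is: \emph{if $p\in P(M)$ satisfies $\mathcal{GN}_\tau(pMp)\cap \mathcal{F}_n=\{0\}$, then $\|vv^\ast D(v)v^\ast\|\leq n$ for every $v\in \mathcal{GN}_\tau(pMp)$.} Fix such a $v$ and set $h=vv^\ast D(v)v^\ast$, which is hermitian by Lemma~\ref{duu} with support under $vv^\ast\leq p$. Let $e$ be the spectral projection of $h$ for the interval $(n,+\infty)$; then $e\leq vv^\ast$ and $w:=ev$ is a partial isometry with $ww^\ast=e$ and $w^\ast w=v^\ast e v$, so $w\in \mathcal{GN}_\tau(pMp)$. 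Using $D(ev)=D(e)v+eD(v)$ and the identity $eD(e)e=0$ (valid for any projection, since $D(e)=D(e)e+eD(e)$), one computes
$$
ww^\ast D(w)w^\ast=eD(e)e+ehe=ehe\geq n\,e=n\,ww^\ast,
$$
so $w\in \mathcal{F}_n$; hence $w=0$ and $\|h_+\|\leq n$. Applying the same computation to the spectral projection $f$ of $h$ for $(-\infty,-n)$ gives $w'=fv$ with $w'w'^\ast D(w')w'^\ast=fhf\leq -n\,w'w'^\ast$, and Lemma~\ref{five} then yields $(w')^\ast\in \mathcal{GN}_\tau(pMp)\cap\mathcal{F}_n$, forcing $w'=0$ and $\|h_-\|\leq n$. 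Thus $\|h\|\leq n$.

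Second, I would eliminate the elements of $\mathcal{F}_n$ in a corner by maximality. For $n\geq k$ let $v_n$ be a maximal element of $\mathcal{F}_n$ and put $P_n=v_nv_n^\ast$, $Q_n=v_n^\ast v_n$, so that $\tau(P_n)=\tau(Q_n)\leq\varepsilon_n$. By Lemma~\ref{orto}, if $v\in \mathcal{GN}_\tau(pMp)$ with $p\leq \mathbf{1}-\bigl(P_n\vee Q_n\vee s(iD(P_n))\vee s(iD(Q_n))\bigr)$, then the quadratic form is additive, $(v_n+v)(v_n+v)^\ast D(v_n+v)(v_n+v)^\ast=v_nv_n^\ast D(v_n)v_n^\ast+vv^\ast D(v)v^\ast$; if moreover $v\in \mathcal{F}_n$, then $v_n+v\in\mathcal{F}_n$ and $v_n<_1 v_n+v$, contradicting maximality. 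Hence such a corner meets $\mathcal{F}_n$ only in $0$, and by the reduction the bound $\leq n$ holds on it.

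Third, I must arrange $\tau(\mathbf{1}-p_n)\to 0$, and this is the crux. The projections $P_n,Q_n$ are harmless since $\tau(P_n),\tau(Q_n)\leq \varepsilon_n\to 0$ by Lemma~\ref{bouo}. The difficulty is that $s(iD(P_n))$ may have \emph{infinite} trace, since $iD(P_n)\in S_0(M,\tau)$ is $\tau$-compact but a $\tau$-compact operator can have infinite-trace support; thus the exact corner of Lemma~\ref{orto} has infinite co-trace. The remedy is spectral truncation: replace $s(iD(P_n))$ by $r_\eta(P_n):=e_\eta^\perp(|iD(P_n)|)$, which is $\tau$-finite precisely because $iD(P_n)$ is $\tau$-compact. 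The key point is that $\tau(P_n)\leq\varepsilon_n\to 0$ gives $P_n\stackrel{t_\tau}\longrightarrow 0$, whence $D(P_n)\stackrel{t_\tau}\longrightarrow 0$ by $t_\tau$-continuity of $D$; so for each fixed $\eta>0$ the comparison argument of Lemma~\ref{bouo} (via \cite[Lemma 2.2.4]{Mur}) yields $\tau(r_\eta(P_n))\to 0$, and likewise for $Q_n$. Taking $p_n=\mathbf{1}-\bigl(P_n\vee Q_n\vee r_\eta(P_n)\vee r_\eta(Q_n)\bigr)$ then forces $\tau(\mathbf{1}-p_n)\to 0$.

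I expect the main obstacle to be that truncating $s(iD(P_n))$ to $r_\eta(P_n)$ destroys the exact additivity of Lemma~\ref{orto}: the cross terms that relied on orthogonality to $s(iD(P_n))$ and $s(iD(Q_n))$ no longer vanish. These surviving terms each carry a factor $E\,D(P_n)$ or $F\,D(Q_n)$ with $E,F\leq \mathbf{1}-r_\eta(P_n)\vee r_\eta(Q_n)$, so $\|E\,iD(P_n)\|\leq\sqrt{\eta}$ and the remaining factors are contractions; thus the total error is $O(\sqrt{\eta})$ \emph{uniformly in $v$}. Establishing this uniform quantitative version of Lemma~\ref{orto} is the principal technical point. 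Granting it, I fix $\eta$ small enough that the error is bounded by a fixed constant $C$, which weakens the conclusion on the corner from membership in $\mathcal{F}_n$ to membership in $\mathcal{F}_{n-C}$; after a harmless shift of the index (replacing $n$ by $n+C$ throughout, and shrinking the corner when needed) the stated estimate $\|vv^\ast D(v)v^\ast\|\leq n$ is recovered while $\tau(\mathbf{1}-p_n)\to 0$ is preserved.
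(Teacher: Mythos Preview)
Your first two steps---the spectral-projection reduction showing that a corner disjoint from $\mathcal{F}_n\setminus\{0\}$ satisfies the norm bound, and the use of Lemma~\ref{orto} together with maximality of $v_n$ to force $\mathcal{GN}_\tau(p_nMp_n)\cap\mathcal{F}_n=\{0\}$---are exactly what the paper does (your computations with $w=ev$ and the appeal to Lemma~\ref{five} for the negative part match the paper's argument).

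The divergence is in your third step, and here you have created an obstacle that does not exist. You worry that $s(iD(P_n))$ may have infinite trace, but this is ruled out by a one-line algebraic observation that you overlooked: for any projection $e$ one has $D(e)=D(e^2)=D(e)e+eD(e)$, so $iD(P_n)=iD(P_n)P_n+P_n\,iD(P_n)$ is a sum of an operator with right support under $P_n$ and its adjoint. Since $l(aP_n)\sim r(aP_n)\leq P_n$ and $r(P_na)\sim l(P_na)\leq P_n$, the support of the sum has trace at most $3\tau(P_n)\leq 3\varepsilon_n$. The same bound holds for $s(iD(Q_n))$, and hence the exact projection
\[
p_n=\mathbf{1}-\bigl(P_n\vee Q_n\vee s(iD(P_n))\vee s(iD(Q_n))\bigr)
\]
already satisfies $\tau(\mathbf{1}-p_n)\leq 8\varepsilon_n\to 0$. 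No spectral truncation, no quantitative version of Lemma~\ref{orto}, and no index shift are needed.

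So the gap in your write-up is twofold: first, the detour through truncation is unnecessary because the Leibniz identity for $D(e)$ bounds $\tau(s(iD(P_n)))$ directly; second, the ``quantitative Lemma~\ref{orto}'' you would need for your route is stated as an expectation rather than proved, so as written the argument is incomplete. Replace your entire third step with the support computation above and the proof is finished.
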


\begin{proof} Let $v_n$ be a maximal element of $\mathcal{F}_n.$ Put
$$
p_n=\mathbf{1}- v_nv_n^\ast \vee v_n^\ast v_n \vee
s(iD(v_nv_n^\ast))\vee s(iD(v_n^\ast v_n)).
$$

Let us prove that
$$
\|vv^\ast D(v)v^\ast\|\leq n
$$
for all $v\in \mathcal{GN}_\tau(p_nMp_n).$

The case  $p_n=0$  is trivial.

Let us consider the case $p_n\neq 0.$ Take $v\in
\mathcal{GN}_\tau(p_nMp_n).$ Let $v v^\ast
D(v)v^\ast=\int\limits_{-\infty}^{+\infty}\lambda \, d\,
e_{\lambda}$ be the spectral resolution of $v v^\ast D(v)v^\ast.$
 Assume that $p=e_n^\perp\neq 0.$ Then
$$
p v v^\ast D(v)v^\ast p\geq n p.
$$
Denote $u=pv.$ Then since $p\leq  vv^\ast,$  we have
\begin{eqnarray*}
u u^\ast D(u)u^\ast  & =& p v v^\ast p D(pv)v^\ast p= \\
& = & p v v^\ast p D(p)vv^\ast p +p v v^\ast p p D(v)v^\ast p=\\
& = & p v v^\ast p D(p) p vv^\ast  +p v v^\ast D(v)v^\ast p =\\
& = &0+ p v v^\ast D(v)v^\ast p\geq n p,
\end{eqnarray*}
i.e.
$$
uu^\ast  D(u)u^\ast \geq n p.
$$
Since $u u^\ast, u^\ast u\leq p_n=\mathbf{1}- v_nv_n^\ast \vee
v_n^\ast v_n \vee s(iD(v_nv_n^\ast))\vee s(iD(v_n^\ast v_n))$ it
follows that $u$ is orthogonal to $v_n,$ i.e.
$uv_n^{\ast}=v_n^{\ast}u=0.$ Therefore  $w=v_n+u\in
\mathcal{GN}_\tau(M).$ Using  Lemma~\ref{orto} we have
\begin{eqnarray*}
w w^\ast D(w)w^\ast & =& v_n v_n^{\ast}  D(v_n)v_n^{\ast} +u
u^\ast D(u)u^\ast\geq n(v_nv_n^\ast+p)=nww^\ast,
\end{eqnarray*}
because
\begin{eqnarray*}
w w^\ast  & =& (v_n+u)(v_n +u)^{\ast}=v_nv_n^{\ast} +u u^\ast=\\
& =& v_nv_n^\ast+pvv^\ast p=v_nv_n^\ast+p.
\end{eqnarray*}
So
$$
w w^\ast D(w)w^\ast \geq n ww^\ast.
$$
This contradicts with the maximality $v_n^.$ From this
contradiction it follows that $e_n^\perp=0.$ This means that
$$
v v^\ast D(v)v^\ast\leq n v v^\ast
$$
for all $v\in \mathcal{U}_\tau(p_nMp_n).$

Set
$$
\mathcal{S}_n=\{v\in \mathcal{GN}_\tau(M): vv^\ast D(v)v^\ast\leq
-n vv^\ast\}.
$$
By Lemma~\ref{five} it follows that $v\in \mathcal{F}_n$ is a
maximal element of $\mathcal{F}_n$ with respect to the partial ordering
$\leq_1$ if and only if $v^\ast$ is a maximal element of
$\mathcal{S}_n$ with respect to this ordering.

Taking into account this observation  in a similar way we can show
that
$$
v v^\ast D(v)v^\ast\geq -n v v^\ast
$$
for all $v\in \mathcal{GN}_\tau(p_nMp_n).$ So
$$
-n v v^\ast\leq v v^\ast D(v)v^\ast\leq n v v^\ast.
$$
This implies that $v v^\ast D(v)v^\ast\in M$ and
\begin{equation}\label{noreq}
 \|vv^\ast D(v)v^\ast\|\leq n
\end{equation}
for all $v\in \mathcal{GN}_\tau(p_nMp_n).$

Finally let us show that
$$
\tau(\mathbf{1}-p_n) \rightarrow 0.
$$

It is clear that
$$
l(iD(v_nv_n^\ast)v_nv_n^\ast)\preceq v_nv_n^\ast,
$$
$$
r(v_nv_n^\ast iD(v_nv_n^\ast))\preceq v_nv_n^\ast.
$$
Since
$$
D(v_nv_n^\ast)=D(v_nv_n^\ast)v_nv_n^\ast+v_nv_n^\ast
D(v_nv_n^\ast)
$$
we have
\begin{eqnarray*}
\tau(s(iD(v_nv_n^\ast))) & =&
\tau(s(iD(v_nv_n^\ast)v_nv_n^\ast+v_nv_n^\ast
iD(v_nv_n^\ast))\leq \\
& \leq & \tau(s(v_nv_n^\ast)\vee l(iD(v_nv_n^\ast)v_nv_n^\ast)\vee
r(v_nv_n^\ast iD(v_nv_n^\ast)))\leq\\
& \leq & \tau(v_n v_n^\ast)+\tau(v_n v_n^\ast)+\tau(v_n
v_n^\ast)=3\tau(v_n v_n^\ast),
\end{eqnarray*}
i.e.
$$
\tau(s(iD(v_nv_n^\ast))) \leq 3\tau(v_n v_n^\ast).
$$
Similarly
$$
\tau(s(iD(v_n^\ast v_n))) \leq 3\tau(v_n^\ast v_n).
$$
Now taking into account that
$$
v_n v_n^\ast \sim v_n^\ast v_n
$$
we obtain
\begin{eqnarray*}
\tau(\mathbf{1}-p_n) & =& \tau(v_nv_n^\ast \vee v_n^\ast v_n \vee
s(iD(v_nv_n^\ast))\vee s(iD(v_n^\ast v_n)))\leq \\
& \leq & \tau(v_n v_n^\ast)+\tau(v_n^\ast v_n)+3\tau(v_n v_n^\ast)+3\tau(v_n^\ast v_n)=\\
& = & 8\tau(v_n v_n^\ast)\rightarrow 0,
\end{eqnarray*}
i.e.
$$
\tau(\mathbf{1}-p_n) \rightarrow 0.
$$
The proof is complete.
\end{proof}

Let $p\in M$ be a projection. It is clear that the mapping
\begin{equation}\label{reduc}
 D^{(p)}:x\rightarrow pD(x)p,\, x\in pS_0(M, \tau)p
\end{equation}
is a derivation on $pS_0(M, \tau)p=S_0(pMp, \tau_p),$ where
$\tau_p$ is the restriction of $\tau$ on $pMp.$

\begin{lem}\label{sequ}
Let $\{p_n\}$ be the sequence of projections from
Lemma~\ref{boun}. Then for every $n\in \mathbb{N}$ there exists an
element $a_n\in p_nMp_n$ such that
\begin{equation}\label{inner}
D^{(p_n)}(x)=a_n x-xa_n
\end{equation}
for all $x\in S_0(p_nMp_n, \tau_{p_n}).$
\end{lem}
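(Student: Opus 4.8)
The plan is to fix $n\geq k$, abbreviate $N:=p_nMp_n$ (a von Neumann algebra with trace $\tau_{p_n}$), and regard $D^{(p_n)}$ as a $t_\tau$-continuous derivation of $S_0(N,\tau_{p_n})$. Inside $S_0(N,\tau_{p_n})$ sits its bounded part $S_0(N,\tau_{p_n})_b=N\cap S_0(N,\tau_{p_n})$, the C$^\ast$-algebra of bounded $\tau$-compact operators; it is a $\sigma$-weakly dense ideal of $N$, since the finite-trace projections $e\leq p_n$ form an approximate unit increasing to $p_n$. Once I show that $D^{(p_n)}$ carries this C$^\ast$-algebra into itself and is norm bounded there, the statement will follow from Sakai's theorem plus a density argument, so the whole problem reduces to producing the norm bound from Lemma~\ref{boun}.

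First I would reduce to finite corners. For a finite-trace projection $e\leq p_n$ the map $D_e(x)=eD(x)e$ is a derivation of the finite von Neumann algebra $eMe$. Every unitary $u$ of $eMe$ lies in $\mathcal{GN}_\tau(p_nMp_n)$ with $uu^\ast=u^\ast u=e$, so $eD(u)u^\ast=uu^\ast D(u)u^\ast$ is exactly the quantity bounded by Lemma~\ref{boun}; hence $\|D_e(u)\|=\|eD(u)u^\ast u\|\leq\|eD(u)u^\ast\|\leq n$. Decomposing an arbitrary element into real and imaginary parts and writing each self-adjoint contraction as an average of two unitaries yields $\|D_e\|\leq 2n$, uniformly in $e$, so in particular $D_e$ maps $eMe$ into $eMe$.

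Next I would upgrade these corner bounds to a bound on $D^{(p_n)}$ itself. Fix a self-adjoint $x\in S_0(N,\tau_{p_n})_b$ with $\|x\|\leq 1$ and a finite-trace projection $f\leq p_n$. For finite-trace $e$ with $f\leq e\leq p_n$ one has $fD(exe)f=fD_e(exe)f$, of norm at most $2n$; letting $e\uparrow p_n$ gives $exe\to x$ in norm (finite projections are an approximate unit for $\tau$-compacts), hence $D(exe)\to D(x)$ in $t_\tau$ by continuity of $D$, so $fD(exe)f\to fD^{(p_n)}(x)f$ in $t_\tau$. Since the norm ball of the finite algebra $fMf$ is $t_\tau$-closed, we get $\|fD^{(p_n)}(x)f\|\leq 2n$ for every finite-trace $f\leq p_n$. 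Here $\tau$-compactness is indispensable: as $D$ is skew-hermitian, $z:=D^{(p_n)}(x)$ is skew-adjoint, and if $\|z\|>2n$ then $f:=\chi_{(2n,\infty)}(|z|)$ would be a nonzero projection of \emph{finite} trace commuting with $iz$ and satisfying $\|fzf\|>2n$, a contradiction. Thus $\|D^{(p_n)}(x)\|\leq 2n$, and by decomposing a general element into self-adjoint parts, $D^{(p_n)}$ maps $S_0(N,\tau_{p_n})_b$ into itself and is norm bounded. I expect this upgrading step to be the main obstacle, precisely because of the case $\tau(p_n)=\infty$: there one cannot simply let $f\uparrow p_n$ inside the measure topology, and the passage from ``all finite compressions bounded'' to ``bounded'' must be made through the spectral/$\tau$-compactness argument above rather than by naive approximation.

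Finally, restricted to the C$^\ast$-algebra $S_0(N,\tau_{p_n})_b$, $D^{(p_n)}$ is a bounded derivation, so by Sakai's theorem it is implemented by an element of the enveloping von Neumann algebra; composing with the canonical normal $\ast$-homomorphism onto the $\sigma$-weak closure $N=p_nMp_n$ of $S_0(N,\tau_{p_n})_b$ produces $a_n\in p_nMp_n$ with $D^{(p_n)}(x)=a_nx-xa_n$ for all $x\in S_0(N,\tau_{p_n})_b$. The inner derivation $x\mapsto a_nx-xa_n$ and $D^{(p_n)}$ are both $t_\tau$-continuous and agree on the $t_\tau$-dense subalgebra $S_0(N,\tau_{p_n})_b$, hence they coincide on all of $S_0(p_nMp_n,\tau_{p_n})$, which is the assertion.
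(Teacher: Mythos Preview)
Your proof is correct, and the overall architecture---bound $D^{(p)}$ on finite corners via unitaries and Lemma~\ref{boun}, invoke Sakai, then pass to all of $S_0(p_nMp_n,\tau_{p_n})$ by $t_\tau$-density---matches the paper. The genuine difference lies in how the case $\tau(p_n)=\infty$ is handled.

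The paper never attempts to show that $D^{(p_n)}$ itself is norm bounded on $S_0(p_nMp_n,\tau_{p_n})_b$. Instead it applies Sakai's theorem \emph{separately} to each finite corner $eMe$ (which is a von Neumann algebra, so only the von Neumann version of Sakai is needed), obtaining elements $a_e\in eMe$ with $\|a_e\|\leq 2n$ for every $e\in P_\tau(p_nMp_n)$. The uniformly bounded net $\{a_e\}$ then has a $*$-weak cluster point $a_n\in p_nMp_n$, and one checks directly that $fD^{(p_n)}(x)f=f(a_nx-xa_n)f$ for all finite-trace $f$ and all $x$ with $\tau$-finite support, whence the claim.

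Your route is different: you first \emph{upgrade} the uniform corner bound to a global norm bound $\|D^{(p_n)}|_{S_0(N,\tau_{p_n})_b}\|\leq 2n$ by the spectral argument exploiting that $z=D^{(p_n)}(x)$ is skew-adjoint and $\tau$-compact (so the offending spectral projection $\chi_{(2n,\infty)}(|z|)$ has finite trace and furnishes the contradictory $f$). Only then do you apply Sakai once, to the C$^\ast$-algebra $S_0(N,\tau_{p_n})_b$, and push the implementing element down to $N$ via the canonical normal $\ast$-homomorphism $(S_0(N)_b)^{**}\to N$. This trades the paper's $*$-weak compactness step for your spectral\slash $\tau$-compactness step and requires the C$^\ast$-algebra form of Sakai's theorem together with the universal property of the bidual, whereas the paper stays entirely within von Neumann corners. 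Both arguments are clean; yours makes more explicit use of the ambient $\tau$-compactness, while the paper's is slightly more self-contained in that it only ever invokes Sakai on von Neumann algebras.
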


\begin{proof}
Let $n\in \mathbb{N}$ be a fixed number. Since the trace $\tau$ is
semi-finite we have
$$
p_n=\bigvee\{p\in P_\tau(p_nMp_n)\}.
$$
Let us show that the derivation $D^{(p)}$ defined as in
\eqref{reduc},
 maps $pMp$ into itself for all $p\in P_\tau(p_nMp_n).$
 Take $v\in \mathcal{U}(pMp).$ Then $vv^\ast=v^\ast v=p$ and
hence by Lemma~\ref{boun} we have that
\begin{eqnarray*}
D^{(p)}(v)=pD(pvp)p & =& vv^\ast D(v)v^\ast v\in pMp
\end{eqnarray*}
and
$$
\|D^{(p)}(v)\|\leq n.
$$
Since any element from the unit ball of $pMp$ can be represented
of the form
$$
x=\frac{1}{2}(v_1+v_2+v_3+v_4),
$$
where $v_i\in \mathcal{U}(pMp),$ $i\in \overline{1, 4},$  it
follows that
$$
\|D^{(p)}(x)\|\leq 2n.
$$
for all $x\in pMp,$ $\|x\|\leq 1.$ So, for each $p\in
P_\tau(p_nMp_n)$ the derivation $D^{(p)}$
 maps $pMp$ into itself and $\|D^{(p)}\|\leq 2n.$
 By Sakai's
Theorem \cite[Theorem 4.1.6]{Sak1} there is an element $a_p\in
pMp$  such that $\|a_p\|\leq 2n$ and $D^{(p)}(x)=a_p x-xa_p$ for
all $x\in pMp.$

If $\tau(p_n)<+\infty$ setting  $p=p_n,$ we can take  $a_n=a_p.$

Now let us consider the case  $\tau(p_n)=+\infty.$ Since the net
$\{a_p\}_{p\in P_{\tau}(p_nMp_n)}$ is norm bounded we have that it
contains a subnet which *-weakly converges in $p_nMp_n.$ Without
loss of generality we may assume that $a_p\rightarrow a_n$ for
some $a_n\in p_nMp_n.$

Let $x\in M$ be an element with  $s(x)\in P_\tau(p_nMp_n).$ Since
$M$ is semi-finite there exist  $f, e\in P_\tau(p_nMp_n)$ such
that  $s(x)\leq f\leq e.$ Then
$$
fD^{(p_n)}(x)f=fD(x)f=fD^{(e)}(x)f=fa_ex-xa_ef,
$$
i.e.
$$
fD^{(p_n)}(x)f=fa_ex-xa_ef.
$$
Since $\{a_e\}_{e\in J}$ is a subnet of the net $\{a_p\}_{p\in
P_{\tau}(p_nMp_n)},$ where $J=\{e\in P_\tau(p_nMp_n): f\leq e\},$
it follows that $a_e\rightarrow a_n.$ Taking into account that the
left side of the last equality is independent  on $e$ we have
$$
fD^{(p_n)}(x)f=f(a_nx-xa_n)f.
$$
Since $f$ is arbitrary it follows that
$$
D^{(p_n)}(x)=a_nx-xa_n.
$$
Taking into account that the set $\{x\in S_0(p_nMp_n, \tau_{p_n}):
\tau(s(x))<+\infty\}$ is $t_\tau$-dense in $S_0(p_nMp_n,
\tau_{p_n})$ and that $D$ is $t_\tau$-continuous we obtain
$$
D^{(p_n)}(x)=a_n x-xa_n
$$ for all
$x\in S_0(p_nMp_n, \tau_{p_n}).$ The proof is complete.
\end{proof}

\textit{Proof of Theorem~\ref{main}}. Let $\{p_n\}$ be the
sequence of projections from Lemma~\ref{boun} and let $\{a_n\}$ be
the sequence from Lemma~\ref{sequ}. Since
$\tau(\mathbf{1}-p_n)\rightarrow 0,$ there exist a sequence
$n_1<n_2<\ldots <n_k<\ldots $ such that
$\tau(\mathbf{1}-p_{n_k})<1/2^{k+1}.$ Set
$$
q_k=\bigwedge\limits_{i=k}^\infty p_{n_i}
$$
and
$$
b_k=q_k a_{n_{k}}q_k
$$
for all  $k\in \mathbb{N}.$ Then
$$
D^{(q_k)}(x)=b_k x-xb_k
$$
for all $x\in S_0(q_kMq_k, \tau_{q_k}).$

Now we will construct a sequence $\{c_k\}$ such that $c_k\in q_k
Mq_k$ and
$$
D^{(q_k)}(x)=c_k x-xc_k
$$
for all $x\in S_0(q_kMq_k, \tau_{q_k}),$ and moreover
$q_ic_jq_i=c_i$ for all $i<j.$

Set $c_1=b_1$ and suppose that elements $c_1, c_2,\ldots, c_k$ have been
already constructed. Since
$$
D^{(q_k)}(q_kxq_k)=q_kD^{(q_{k+1})}(q_kxq_k)q_k
$$
we obtain that
$$
[c_k, q_kxq_k]=q_k[b_{k+1}, q_kxq_k]q_k=[q_kb_{k+1}q_k, q_kxq_k]
$$
for all $x\in S_0(M, \tau).$ In particular,  the element
$c_k-q_kb_{k+1}q_k$ commutes with any projection from
$P_\tau(q_kMq_k).$  Hence, the element $c_k-q_kb_{k+1}q_k$ belongs to
the center of the algebra $q_kMq_k.$ Then there exists a central
element $f_k$ from $q_{k+1}Mq_{k+1}$ such that
$c_k-q_kb_{k+1}q_k=f_k q_k.$ Set $c_{k+1}=b_{k+1}+f_{k}.$ Then
$$
D^{(q_{k+1})}(x)=c_{k+1} x-xc_{k+1}
$$
for all $x\in S_0(q_{k+1}Mq_{k+1}, \tau_{q_{k+1}}).$ Further
$$
q_kc_{k+1}q_k=q_kb_{k+1}q_k+f_{k}q_k=c_k
$$
and
$$
q_ic_{k+1}q_i=q_iq_kc_{k+1}q_kq_i=q_ic_kq_i=c_i
$$
for $i<k+1.$

Now let us show that  $\{c_k\}$ is a $t_\tau$-fundamental
sequence. Let $\varepsilon>0.$ Take a number $k_\varepsilon$ such
$1/2^{k_\varepsilon}<\varepsilon.$ Put $p=q_{k_\varepsilon}.$  For
$i, j>k_\varepsilon$ we have
$$
p(c_i-c_j)p=q_{k_\varepsilon}c_iq_{k_\varepsilon}-q_{k_\varepsilon}c_jq_{k_\varepsilon}
=c_{k_\varepsilon}-c_{k_\varepsilon}=0.
$$
Since $\tau(\mathbf{1}-q_{k_\varepsilon})<1/2^{k_\varepsilon}$ we
have that $\{c_k\}$ is fundamental in the so-called topology of two-side
convergence in measure. But this topology
 coincides with the topology $t_\tau$ (see \cite[Proposition 3.4.11]{Mur}), and therefore
 there exists an element $c\in S(M, \tau)$ such that $\{c_k\}$
converges to $c$ in $t_\tau$-topology. Since
$$
q_kD(q_kxq_k)q_k=c_k (q_kxq_k)-(q_kxq_k)c_k,
$$
due to the continuity of the derivation $D$ we obtain that
$$
D(x)=cx-xc
$$
for all $x\in S_0(M, \tau).$ The proof is complete.

From Theorems \ref{contin} and \ref{main} we obtain the following
result.

\begin{thm}\label{proper}
 If $M$ is a properly infinite von Neumann algebra with a faithful normal
semi-finite trace $\tau,$ then any derivation $D:S_0(M,
\tau)\rightarrow S_0(M, \tau)$  is  spatial and implemented by an
element $a\in S(M, \tau).$
\end{thm}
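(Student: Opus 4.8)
The plan is to obtain this statement as an immediate consequence of the two main results already established, namely Theorem~\ref{contin} and Theorem~\ref{main}, so that no new machinery is required. The key observation is that proper infiniteness has already been exploited in Section~3 precisely in order to remove the continuity hypothesis, and that spatiality for continuous derivations has been settled in full generality in Section~4. Thus the corollary amounts to chaining these two facts.

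First I would invoke Theorem~\ref{contin}: since $M$ is properly infinite with a faithful normal semi-finite trace $\tau$, every derivation $D:S_0(M,\tau)\to S_0(M,\tau)$ is automatically $t_\tau$-continuous. This is the step where the standing assumption does the real work; the argument there constructs, from a hypothetical non-closed graph, an element $x\in S_0(M,\tau)_b$ whose image under $D$ cannot lie in any neighborhood $U(\lambda,2\varepsilon)$, forcing a contradiction and hence closedness of the graph, whence continuity via the closed graph theorem.

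Next I would feed this continuity into Theorem~\ref{main}, which holds for an arbitrary von Neumann algebra with a faithful normal semi-finite trace and asserts that every $t_\tau$-continuous derivation on $S_0(M,\tau)$ is spatial, implemented by some $a\in S(M,\tau)$. Combining the two: given any derivation $D$ on $S_0(M,\tau)$, Theorem~\ref{contin} renders it $t_\tau$-continuous, and then Theorem~\ref{main} produces the implementing element $a\in S(M,\tau)$ with $D(x)=ax-xa$ for all $x\in S_0(M,\tau)$, which is exactly the claim.

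Since the statement is a straightforward conjunction of the two theorems, there is no genuine obstacle remaining at this stage; the substantive difficulties were the automatic-continuity argument of Section~3 in the properly infinite case and the maximality/Zorn's-lemma construction of the implementing operator in Section~4. The only point worth double-checking is that the implementing element is asserted to lie in $S(M,\tau)$ rather than in $S_0(M,\tau)$, consistently with Theorem~\ref{main}, so the formulation of the corollary should not inadvertently claim $a\in S_0(M,\tau)$.
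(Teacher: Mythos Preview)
Your proposal is correct and follows exactly the paper's own approach: the paper states Theorem~\ref{proper} as an immediate consequence of Theorems~\ref{contin} and~\ref{main}, with no additional argument. Your extra remarks about the internal mechanics of those theorems and the caveat that $a\in S(M,\tau)$ rather than $S_0(M,\tau)$ are accurate but not needed for the proof itself.
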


 \bigskip

\section*{Acknowledgment}

Part of this work was done within
the framework of the Associateship Scheme of the Abdus Salam
International Centre  for Theoretical Physics (ICTP), Trieste,
Italy. The first author would like to thank ICTP for providing financial support
 and all facilities during his stay at (July-August, 2013).

\end{document}